\journalname{Numer.\ Algorithms}
\DeclareMathAlphabet{\pazocal}{OMS}{zplm}{m}{n}
\numberwithin{equation}{section}
\newcommand\scheme[1]{\mbox{\texttt{#1}}}
\newcommand\nl{\\[0.75\jot]} 
\newcommand{\CC}{{{\mathbb{C}}\vphantom{|}}}
\newcommand{\NN}{{{\mathbb{N}}\vphantom{|}}}
\newcommand{\dd}{{\mathrm{d}}}
\newcommand{\ee}{{\mathrm{e}}}
\newcommand{\ii}{{\mathrm{i}}}
\newcommand{\nice}[1]{\pazocal{#1}} 
\newcommand{\nL}{\nice{L}}
\newcommand{\nP}{\nice{P}}
\newcommand{\nS}{\nice{S}}
\renewcommand\,{\ensuremath{\mspace{1.5mu}}} 
\newcommand\Order{{\mathscr{O}}}
\newcommand{\norm}[2]{\|#2\|_{#1}}
\newcommand{\normbig}[2]{\big\|#2\big\|_{#1}}
\newcommand\pat{\partial_{\,t}}
\renewcommand\d{\cdot}
\newcommand{\ZZ}{{{\mathbb{Z}}\vphantom{|}}}
\newcommand\il{\langle}
\newcommand\ir{\rangle}
\newenvironment{rev}{}{}%
\begin{document}
\title{Adaptive high-order splitting methods for systems of nonlinear evolution equations with periodic boundary conditions
\thanks{This work was supported by the Austrian Science Fund (FWF) under grant P24157-N13 and the Vienna Science and Technology Fund (WWTF) under the grant MA14-002. The computational results presented have been achieved in
part using the Vienna Scientific Cluster (VSC). We would like to thank Benson K. Muite (University of Tartu, Estonia)
for many helpful discussions and implementation of a first version of our numerical solver.}}


\titlerunning{Adaptive high-order splitting methods for nonlinear evolution equations}        

\author{Winfried Auzinger, Othmar Koch, and Michael Quell}


\institute{Winfried Auzinger \at
              Institut f{\"u}r Analysis und Scientific Computing,
              Technische Universit{\"a}t Wien,
              Wiedner Hauptstra{\ss}e \mbox{8-10/E101},
              A-1040 Wien, Austria \\
              \email{w.auzinger@tuwien.ac.at}           
           \and
           Othmar Koch \at
              Fakult{\"a}t f{\"u}r Mathematik,
              Universit{\"a}t Wien,
              Oskar-Morgenstern-Platz~1, A-1090 Wien, Austria \\
              \email{othmar@othmar-koch.org}
           \and
           Michael Quell \at
              Institut f{\"u}r Analysis und Scientific Computing,
              Technische Universit{\"a}t Wien,
              Wiedner Hauptstra{\ss}e \mbox{8-10/E101},
              A-1040 Wien, Austria \\
              \email{michael.quell@yahoo.de}           
}

\date{Received: \today / Accepted: date}
\maketitle

\begin{abstract}
We assess the applicability and efficiency of time-adaptive high-order splitting methods
applied for the numerical solution of (systems of) nonlinear parabolic problems under periodic
boundary conditions. We discuss in particular several applications generating intricate patterns
and displaying nonsmooth solution dynamics. First we give a general error analysis for splitting
methods for parabolic problems under periodic boundary conditions and derive the necessary
smoothness requirements on the exact solution in particular for the Gray--Scott equation
\begin{rev}and the Van der Pol equation.\end{rev}
Numerical examples demonstrate the convergence of the methods and serve to compare
the efficiency of different time-adaptive splitting schemes
and of splitting into either two or three operators, based on appropriately constructed
a~posteriori local error estimators.
\keywords{Nonlinear evolution equations
 \and Splitting methods
 \and Adaptive time integration
 \and Local error
 \and Convergence}
\subclass{65J10 \and 65L05 \and 65M12 \and 65M15}
\end{abstract}

\section{Introduction}\label{sec:intro}

We are interested in computational methods for nonlinear evolution
equations of the type
\begin{equation}\label{de1}
\partial_t u(t) =  A\,u(t) + B(u(t)),\quad t>t_0,
\end{equation}
on a Banach space $\nice{B}$,
\begin{rev}which in our examples equals $L^2$ on the $d$-dimensional torus\end{rev}.
Here, $A: \nice{D}\subseteq \nice{B} \to \nice{B}$ is an (unbounded)
differential operator and $B$ a generally unbounded nonlinear operator
\begin{rev}whose domain has nonempty intersection with $\nice{D}$\end{rev}.

To enable an efficient numerical solution of~\eqref{de1}
for large-scale applications, adaptive high-order time-discretizations are central. In some applications
the promised speed-up will be critical for the feasibility of a simulation.
In many realistic models, the stiffness of the operators $A$ and $B$ is different which suggests
to use splitting methods which separately propagate the two vector fields. If $A$ is a linear differential operator, effective schemes
are known which solve the subproblem efficiently after appropriate space discretization.
For the problems discussed in this paper, a Fourier pseudospectral space discretization
is the most natural choice as this allows to propagate the linear part by exponentiation of
a diagonal matrix.

Parabolic equations often induce high computational demand due to challenging solution dynamics,
which suggests to employ adaptive time-stepping
\begin{rev}in order to accommodate for local variations in the numerical error.
However, this is not the only reason for using adaptivity. Typically, the optimal step-size
is not known a priori, and an adaptive procedure determines the appropriate value
within a few steps, see for example Section~\ref{subsec:compare_gs}. Moreover, adaptive time-stepping
increases the reliability of a computation, see for instance~\cite{soederlind06b}.
\end{rev}

At the (time-)semi-discrete level, $s$-stage exponential splitting methods for the integration of~\eqref{de1}
use multiplicative combinations of the partial flows $ \phi_A(t,u) $ and $ \phi_B(t,u) $.
For a single step $ (0,u_0) \mapsto (h,u_1) $ with time-step $ t=h $, this reads
\begin{equation}\label{splitting1}
u_{1} := \nice{S}(h,u_0) = \phi_B(b_s h,\cdot) \circ \phi_A(a_s h,\cdot) \circ \ldots \circ
                              \phi_B(b_1 h,\cdot) \circ \phi_A(a_1 h,u_0),
\end{equation}
where the coefficients $ a_j,b_j,\, j=1 \ldots s $ are determined according to the requirement
that a prescribed order of consistency is obtained~\cite{haireretal02}.

Compared to highly implicit methods as for instance implicit Runge--Kutta methods
or their exponential counterparts (see~\cite{hocost10}), splitting methods are easy to implement
and efficient in combination with suitable spatial discretization and appropriate implementations or approximations
of the subflows $ \phi_A $ and $ \phi_B $.
\begin{rev}
This is an important asset of our approach,
however we will demonstrate in addition that adaptive choice of the time steps
leads to a more efficient solution for problems where the variation
in the solution is large.
\end{rev}
For related work on adaptivity using a pair of lower order
methods we refer to~\cite{descombesetal11}.

A rigorous error analysis of splitting methods for Schr\"odinger equations has first been given
for the second-order Strang splitting scheme in~\cite{jahlub00}, which has later been extended
to higher-order splittings in~\cite{thalhammer08}. The more involved arguments for the
nonlinear case have been devised in~\cite{lubich07} for the Schr\"odinger--Poisson
and cubic nonlinear Schr\"odinger equation for second order splitting; higher-order
methods are analyzed in~\cite{knth10a}.

The error analysis relies on an error representation which was first proven in
\cite{knth10a}: the local error of a splitting
method of order $p$ applied to a nonlinear evolution equation has
an error expansion with leading term
\begin{equation}\label{errorexpansion}
\nL(h,u)\sim\sum_{k=1}^p \!
\underset{|{\mu}| \leq p-k}{\sum_{\mu \in \NN^k}} \tfrac{1}{\mu!}
h^{k+|\mu|} C_{k \mu} \prod_{\ell = 1}^{k}
\mathrm{ad}_{D_A}^{\mu_\ell}(D_B) \ee^{h D_A} u,
\end{equation}
where $C_{k\mu}$ are computable constants and $D_A,\ D_B$ represent the Lie derivatives
of the two vector fields, respectively.
$\mathrm{ad}_{D_A}^{\mu_\ell}(D_B)$ denotes the $\mu_\ell$-fold commutator.
In our subsequent analysis we will
make use of this error representation, where the main task will be to
compute and estimate the commutators of the vector fields in an appropriate
functional analytic setting in the space of periodic functions.
To this end, we will resort to a Sobolev theory on the torus,
which we review in detail in Appendix~\ref{sec:period}, to which we refer for notations
used in the subsequent error analysis.

Detailed understanding and analysis of splitting methods for parabolic
problems in particular for the nonlinear case is missing to date.
Partial results have been obtained by other authors;
recent work for linear problems can be found
in~\cite{blanesetal13} and~\cite{hansenetal09a}. In particular, in~\cite{blanesetal13},
a number of higher order methods with complex coefficients are constructed. In these papers,
splitting methods are analyzed in the context of semigroup theory.
However, the authors do not exploit the special structure of the local error
(as specified in~\cite{auzingeretal13a} in terms of iterated commutators).
Therefore the results in~\cite{hansenetal09a} rely on unnaturally restrictive regularity
assumptions, and the same is true for the convergence results given in~\cite{blanesetal13}.

Section~\ref{sec:errest} introduces a number of local and global a~posteriori error
estimators whose performance will subsequently be assessed.

In Section~\ref{sec:gs}, our theoretical framework is applied to analyze the convergence
of splitting methods for the Gray--Scott equation, where the regularity
requirements on the exact solution are worked out which ensure boundedness
of the commutators appearing in the error expansion.

{{In Section~\ref{sec:VP}, we investigate the
Van der Pol system, which has a stiff limit cycle. Adaptive time-stepping is shown to give
rise to guaranteed accuracy, and in some cases significantly reduced computation times compared to fixed time steps.}}

In Section~\ref{sec:abc} we demonstrate that splitting into three operators
can be beneficial computationally if the structure of the vector field
enables exact integration of the subproblems, by resorting to computations
for the Gray--Scott equations.

The functional analytic framework for the error analysis of splitting methods
applied to parabolic problems under periodic boundary conditions is briefly recapitulated in Appendix~\ref{sec:period},
which states the underlying results for the space of periodic functions on the torus.
Sobolev embeddings which are used in our error estimates are stated in Appendix~\ref{sec:sobemb}
with a brief indication of the proofs.

\section{A posteriori local error estimators} \label{sec:errest}

In this section, we briefly describe \begin{rev}three\end{rev} classes of computable a~posteriori local error estimators
which serve as our basis for adaptive time-stepping and which have different
advantages depending on the context in which they are applied.
\emph{Embedded pairs of splitting formulae} have been introduced in
\cite{knth10b} and are based on reusing a number of evaluations from
the basic integrator.
\begin{rev}For methods of odd order\end{rev}, an asymptotically correct error
estimator can be computed at the same cost as for the basic method
by employing the adjoint method, see~\cite{part1},
and finally the \emph{Milne device} relies on the explicit knowledge of the
leading error terms of methods of equal order. A collection of splitting coefficients
covering also these three types of error estimators has been compiled at the webpage
\begin{center} \texttt{http://www.asc.tuwien.ac.at/\~{}winfried/splitting/} \end{center}
which we subsequently refer to as~\cite{splithp}.

\subsection{Embedded pairs} \label{subsec:embed}

In~\cite{knth10b}, pairs of splitting schemes of orders $ p $ and $ p+1 $ are specified.
The idea is to select a controller $ \bar \nS $ of order $ p+1 $ and to construct \begin{rev}an integrator\end{rev} $ \nS $
of order~$ p $ for which a maximal number of compositions coincide with those of the controller.
To construct pairs offering an optimal balance between cost and accuracy, we fix a `good' controller of order $ p+1 $
and wish to adjoin to it a `good' integrator of order $ p $.
Since the number of compositions $ \bar{s} $ in the controller
will be higher than the number of compositions $ s $ in the integrator,
we can select an optimal embedded integrator
$ \nS $ from a set of candidates obtained by flexible embedding,
where the number of coinciding coefficients is not a priori fixed. The idea is expanded in detail in
\cite{part1}, where optimized methods are determined.

\subsection{Adjoint pairs and palindromic formulae} \label{subsec:palindromic}

\begin{rev}For a scheme $ \nS $ of odd order $ p $\end{rev}, the leading local error terms of $ \nS $ and its adjoint $ \nS^\ast $
are identical up to the factor $ -1 $, see \cite{haireretal02}. Therefore, the averaged additive scheme
\begin{equation}
\label{eq:pali-avaraged}
{\bar\nS}(h,u) = \tfrac{1}{2}\,\big( \nS(h,u)+{\nice{S}}^\ast(h,u) \big)
\end{equation}
is a method of order $ p+1 $,
and
\begin{equation*}
\nP(h,u) := \nS(h,u) - {\bar\nS}(h,u) = \tfrac{1}{2}\,\big( \nS(h,u) - {\nice{S}}^\ast(h,u) \big)
\end{equation*}
provides an asymptotically correct local error estimate for $ \nS(h,u) $.
In this case the additional effort for computing the local error estimate is
identical with the effort for the integrator $ \nS $ but not higher as is the case
for embedded pairs.
\begin{rev}
This principle is limited to methods of odd order.
In particular, in \cite{part1} so-called palindromic schemes were constructed
which turn out to have small error constants as compared to competing schemes.
Therefore, we include palindromic pairs in our investigations.
\end{rev}

\subsection{The Milne device} \label{subsec:milne}

In the context of multi-step methods for ODEs,
the so-called Milne device is a well-established technique for constructing
pairs of schemes. In our context, one may aim for finding a pair
$ (\nS,\tilde \nS) $ of schemes of equal order $ p $ such that their local errors
$ \nL,\tilde \nL $ are related according to
\begin{subequations} \label{eq:le-milne}
\begin{align}
\nL(h,u)  &= ~~C(u)\,h^{p+1} + \Order(h^{p+2}), \label{eq:le-milne1} \\
\tilde \nL(h,u) &= \gamma\,\,C(u)\,h^{p+1} + \Order(h^{p+2}), \label{eq:le-milne2}
\end{align}
\end{subequations}
with $ \gamma \not= 1 $. Then, the additive scheme
\begin{equation*}
{\bar\nS}(h,u) = -\tfrac{\gamma}{1-\gamma}\,\nS(h,u) + \tfrac{1}{1-\gamma}\,\tilde \nS(h,u)
\end{equation*}
is a method of order $ p+1 $, and
\begin{equation*}
\nP(h,u) := \nS(h,u) - {\bar\nS}(h,u) = \tfrac{1}{1-\gamma}\,\big( \nS(h,u) - \tilde\nS(h,u) \big)
\end{equation*}
provides an asymptotically correct local error estimate for $ \nS(h,u) $.

\subsection{Step-size selection}\label{subsec:adaptstep}

Based on a local error estimator, the step-size is adapted such that a prescribed local error tolerance
\textrm{tol} is expected to be satisfied in the subsequent step. If $h_{\text{old}}$ denotes the
current step-size, the next step-size $h_{\text{new}}$ is predicted as (see~\cite{haireretal87,recipes88})
\begin{equation}
\label{step-selct}
h_{\text{new}} = h \cdot \min\Big\{\alpha_{\text{max}},\max\Big\{\alpha_{\text{min}},
\Big(\alpha\,\dfrac{\textrm{tol}}{\nP(h_{\text{old}})}\Big)^{\frac{1}{p+1}}\,\Big\}\Big\},
\end{equation}
where we choose \begin{rev}$ \alpha = 0.9$\end{rev}, $ \alpha_{\text{min}} = 0.25 $, $ \alpha_{\text{max}} = 4.0 $.
This simple strategy incorporates safety factors to avoid an oscillating and unstable behavior.
\begin{rev}
The chosen values of $ \alpha_{\text{min}}$ and $ \alpha_{\text{max}}$
are commensurable with the recommendations in~\cite{haireretal02}.
The safety factors have not proven critical in our examples, the local changes
in the stepsizes are usually smaller from step to step, see for example
Figure~\ref{fig:change_vdp}. Only if at
the beginning of time propagation the initial stepsize is unsuitable as
in Figure~\ref{fig:change_gs}, where still no instabilities arise in
the step-size control, however.
\end{rev}

\section{The Gray--Scott equation}\label{sec:gs}

As a concrete example, we first study the Gray-Scott system (see ~\cite{grayscott90}) modeling a two-component
reaction-diffusion process,
\begin{subequations}\label{grayscott}
\begin{eqnarray}
&& \partial_t u(x,y,t) = c_u\,\Delta u(x,y,t) - u(x,y,t)\,v^2(x,y,t) + \alpha(1-u(x,y,t)), \\[1\jot]
&& \partial_t v(x,y,t) = c_v\,\Delta v(x,y,t) + u(x,y,t)\,v^2(x,y,t) - \beta\,v(x,y,t).
\end{eqnarray}
\end{subequations}
This system is of the type~\eqref{de1}, with unknown $ (u(x,y,t),v(x,y,t)) $, the vector of concentrations
of the two chemical species involved. In many situations this model is closed
naturally by periodic boundary conditions.
This system is studied as a model for pattern formation with a rich dynamical behavior.
For $(x,y)\in [-4\pi,4\pi]^2$ we prescribe the initial condition
\begin{equation}\label{inidata1}
u(x,y,0)=0.5+\exp(-1-(x^2+y^2)),\quad
v(x,y,0)=0.1+\exp(-1-(x^2+y^2)).
\end{equation}
A visualization of the solution component $v$ at $t=0,\ 200\begin{rev}0\end{rev}$  and
$400\begin{rev}0\end{rev}$ is shown in Figure~\ref{fig:gs_sol}.

\begin{figure}[h!]
\begin{center}
\includegraphics[width=3.5cm]{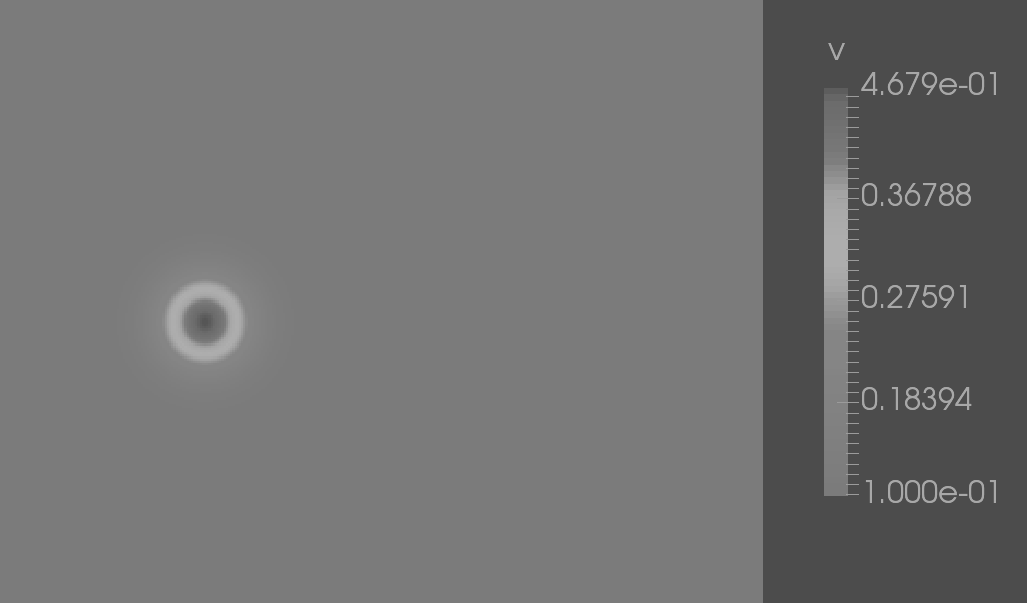} \hspace{5mm}
\includegraphics[width=3.6cm]{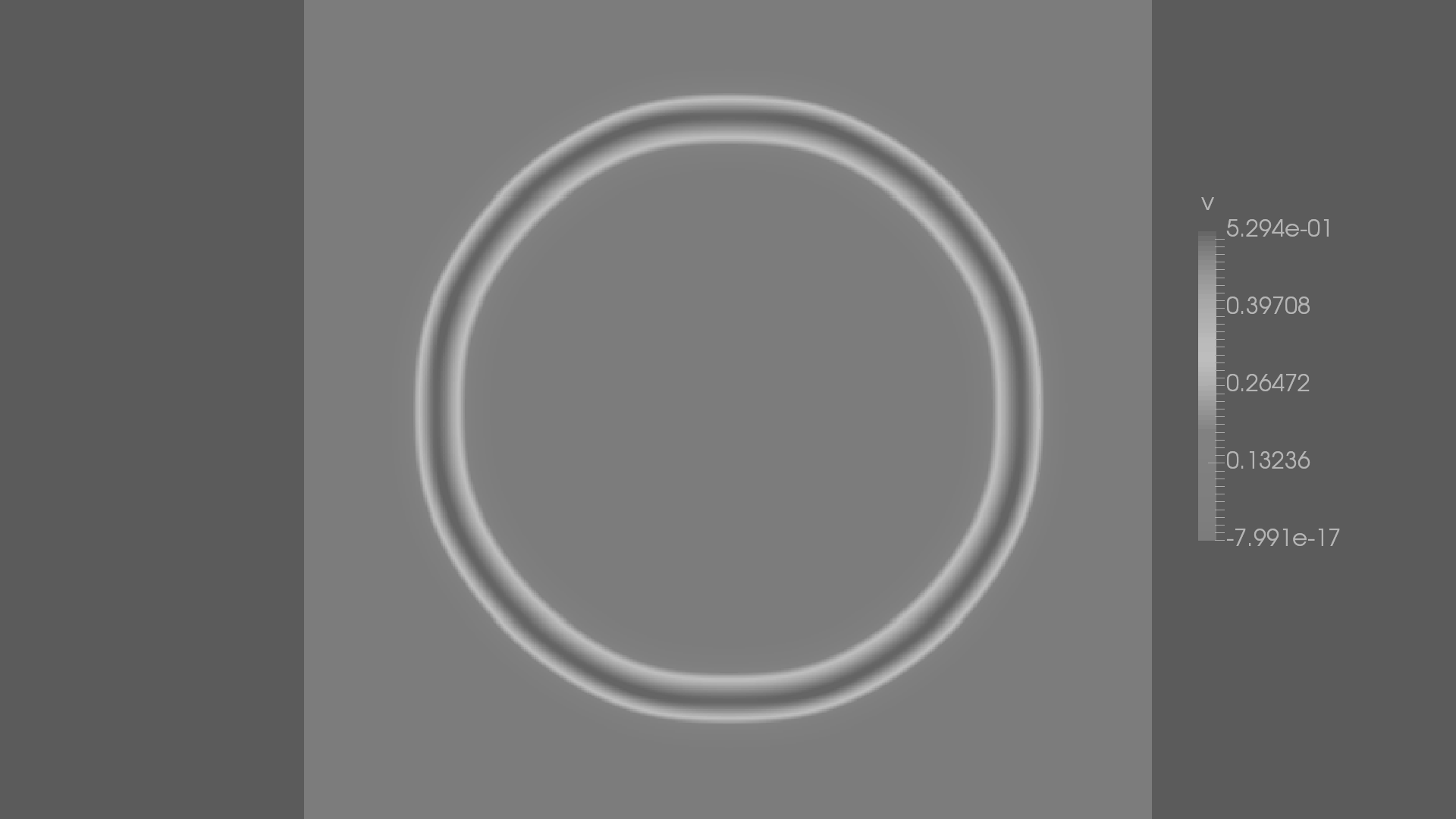} \hspace{5mm}
\includegraphics[width=3.6cm]{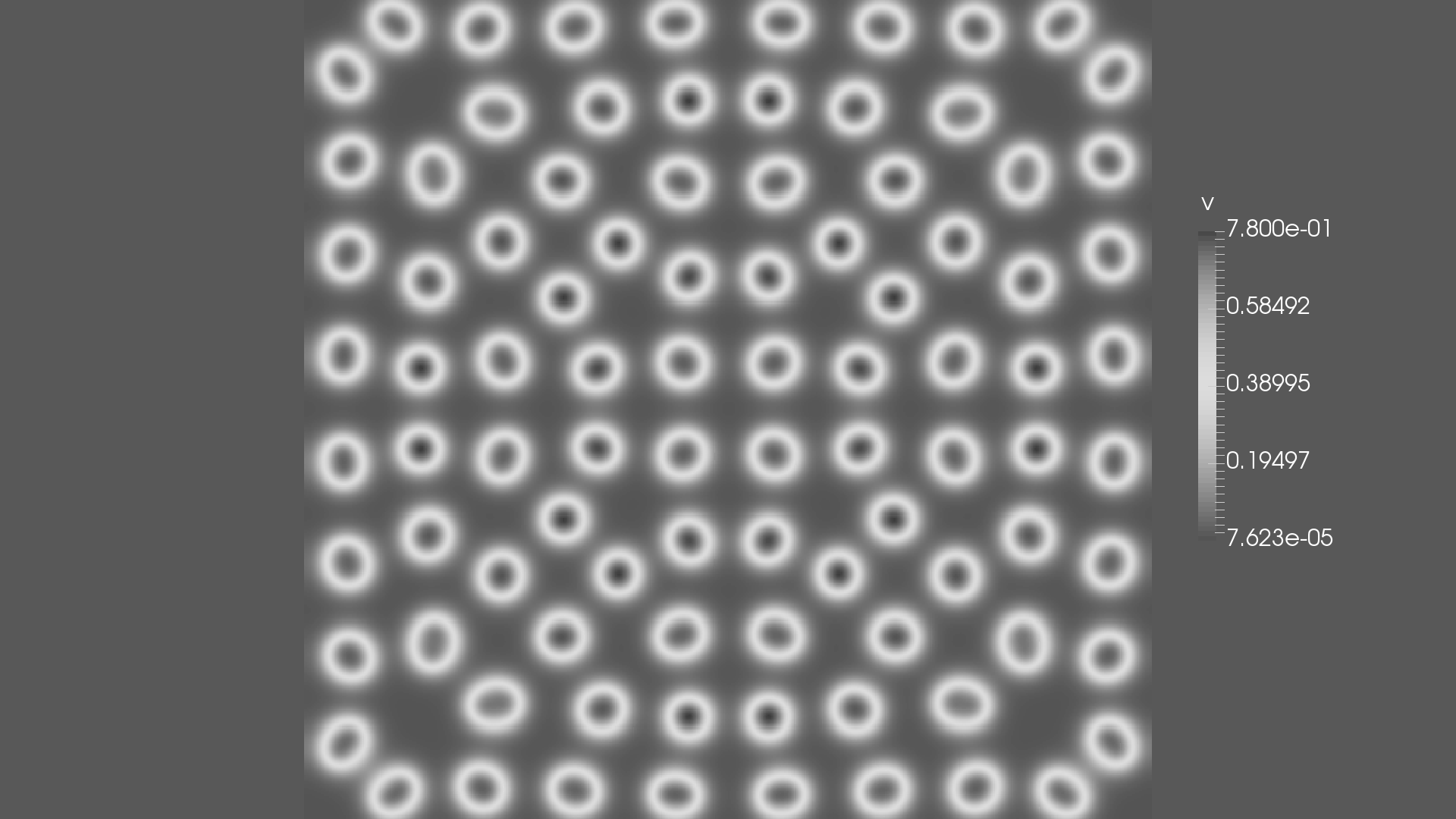}
\caption{Solution component $v$ at $t=0$ (left), $t=2000$ (middle) and $t=4000$ (right) for~\eqref{grayscott}.\label{fig:gs_sol}}
\end{center}
\end{figure}


The problem can also naturally be stated in three spatial dimensions and solved by our methods. In Figure~\ref{fig:gs3d} we show the component $v$
computed by a complex embedded 4/3 splitting pair from~\cite{knth10b} with an underlying spatial discretization with $512^3$ basis functions
and a tolerance of $10^{-5}$.
The solution is plotted at times $t=250\begin{rev}0\end{rev}, \ t=300\begin{rev}0\end{rev},$
$t=400\begin{rev}0\end{rev},$ and $t=500\begin{rev}0\end{rev}$. In the following we will only investigate
the 2D case, as this does not influence the assessment of the time integrators, but reduces computation time.

\begin{figure}[h!]
\begin{center}
\includegraphics[width=8.8cm]{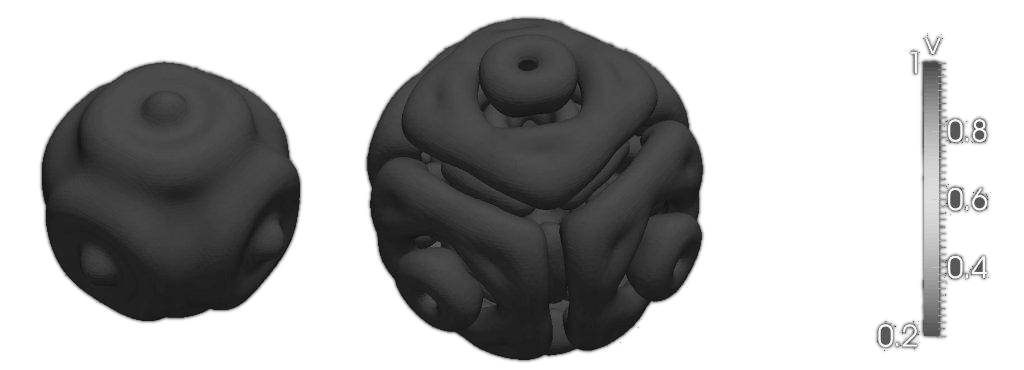} \\[4mm]
\includegraphics[width=4cm]{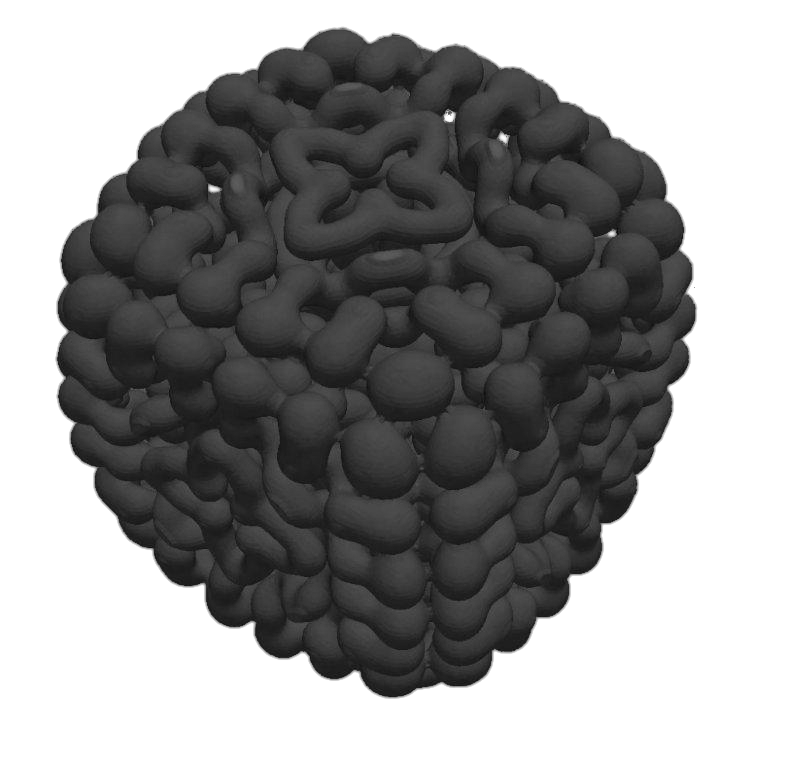} \hspace{1mm}
\includegraphics[width=4.15cm]{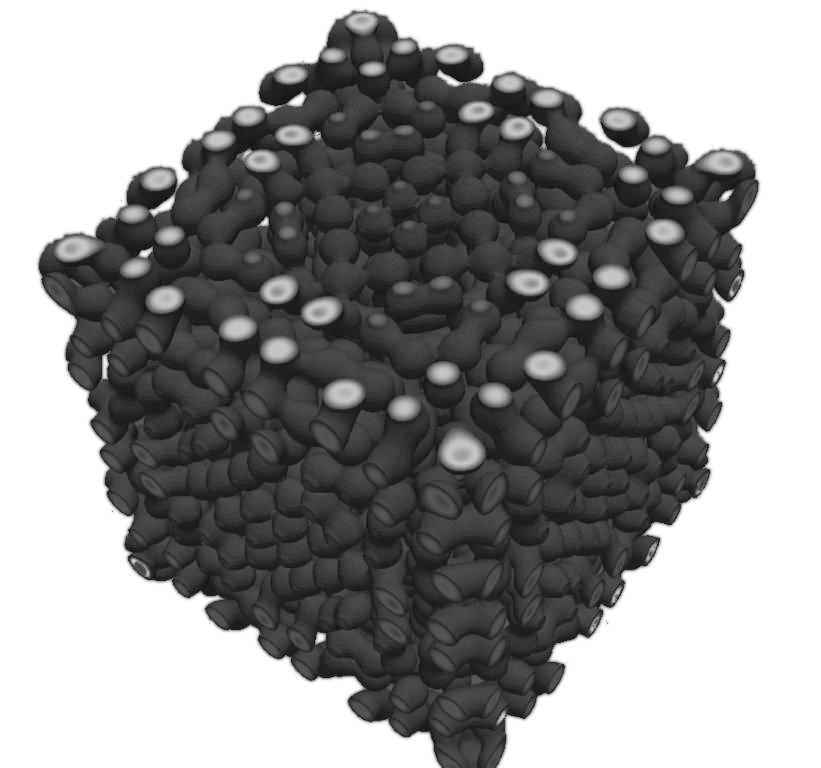}
\caption{Solution component $v$ for~\eqref{grayscott} in 3D at times $t=2500,\ 3000,\ 4000,\ 5000$.\label{fig:gs3d}}
\end{center}
\end{figure}

\subsection{Convergence analysis}\label{convgs}

For the theoretical analysis of the convergence of splitting methods, we use
the error representation~\eqref{errorexpansion}. Since the flow induced by the
cubic nonlinearity is not unconditionally stable, we have to resort to the
three-stage argument first given in~\cite{lubich07} for the cubic Schr{\"o}dinger
equation, see also~\cite{knth10a}:
\begin{itemize}
\item First, show stability in the $H^2$-norm.
\item The local error in $H^2$ is $O(h^{p-1})$, where the constant depends on
the $H^{2p-2}$-norm of $u$.
\item Stability together with consistency in $H^2$ implies convergence of order $p-2$ in $H^2$.
\item Convergence implies boundedness of the numerical solution in $H^2$.
\item Analyze stability in $H^1$. It turns out that the constant depends on the $H^2$-norms
of both the exact and the numerical solution. The latter has already been demonstrated
to be bounded.
\item The local error in $H^1$ is $O(h^{p})$, where the constant depends on
the $H^{2p-1}$-norm of $u$.
\item Since $\|u_n\|_{H^2}$ is bounded, stability and consistency imply convergence order $p-1$ in~$H^1$.
\item Analyze stability in $L^2$. It turns out that the constant depends on the
$H^2$-norms of both the exact and the numerical solution. The latter has already been demonstrated
to be bounded.
\item The local error in $L^2$ is $O(h^{p+1})$, where the constant depends on
the $H^{2p}$-norm of $u$.
\item We conclude convergence of order $p$ in $L^2$.
\end{itemize}
Along this line, we can prove the following theorem, since for the present situation
of a parabolic problem under periodic boundary conditions, the same Sobolev embeddings
hold as on the full space $\mathbb{R}^3$, see Appendix~\ref{sec:period}, so in particular the second order
differential operators and the cubic terms and their commutators admit the same bounds. Thus, the
following proof strategy can be followed in the same manner, taking into
account the commutator bounds given later:

\begin{theorem}\label{theorem1}
Suppose that the Gray--Scott equation~\eqref{grayscott} possesses a uniquely determined sufficiently regular solution~$u$ on the time interval~$[0,T]$.
Then, for any exponential operator splitting method~\eqref{splitting1} of (nonstiff) order~$p \geq 2$, the following error estimates are valid.
\begin{enumerate}
\item
Provided that $\norm{H^{2p}}{\, u(t)} \leq M_{2p}$ for $0 \leq t \leq T$, the bound
\begin{equation}
\label{NLSboundXzero}
\normbig{L^2}{\, u_n - u(t_n)} \leq C \, h^{p}\,, \qquad 0 \leq n \leq N\,, \quad t_N \leq T\,,
\end{equation}
holds true with constant~$C$ depending on~$M_{2p}$.
\item
Provided that $\norm{H^{2p-1}}{\, u(t)} \leq M_{2p-1}$ for $0 \leq t \leq T$, the bound
\begin{equation}
\label{NLSboundXone}
\normbig{H^1}{\, u_n - u(t_n)} \leq C \, h^{p-1}\,, \qquad 0 \leq n \leq N\,, \quad t_N \leq T\,,
\end{equation}
holds true with constant~$C$ depending on~$M_{2p-1}$.
\item
Provided that $\norm{H^{2p-2}}{\, u(t)} \leq M_{2p-2}$ for $0 \leq t \leq T$, the bound
\begin{equation}
\label{NLSboundXtwo}
\normbig{H^2}{\, u_n - u(t_n)} \leq C \, h^{p-2}\,, \qquad 0 \leq n \leq N\,, \quad t_N \leq T\,,
\end{equation}
holds true with constant~$C$ depending on~$M_{2p-2}$.
\end{enumerate}
\end{theorem}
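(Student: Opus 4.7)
The plan is to follow the three-stage bootstrap argument spelled out in the bullet list preceding the statement, which mirrors the one devised for cubic Schr\"odinger in~\cite{lubich07} and extended to higher order in~\cite{knth10a}. The starting point is the local error representation~\eqref{errorexpansion}, which reduces the analysis to bounding the iterated commutators $\mathrm{ad}_{D_A}^{\mu_\ell}(D_B)$ evaluated at $\ee^{h D_A}u$ in the appropriate Sobolev norm on the torus. Since $A=c_u\Delta$ (resp.\ $c_v\Delta$) is diagonal in Fourier, the linear subflow $\ee^{h D_A}$ is a contraction on every $H^s(\mathbb{T}^d)$, so the entire difficulty lies in the polynomial nonlinearity $B(u,v)=(-uv^2+\alpha(1-u),\,uv^2-\beta v)$.

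For the first stage I would establish $H^2$ stability of $\phi_B$ on bounded sets: since $H^2(\mathbb{T}^d)$ is a Banach algebra in dimensions $d\le 3$ via the Sobolev embedding $H^2\hookrightarrow L^\infty$ recalled in Appendix~\ref{sec:sobemb}, the quadratic-cubic terms and their linearisations are locally Lipschitz in $H^2$, and a standard Gronwall argument gives stability. To bound each summand in~\eqref{errorexpansion} one then shows, by induction on $\mu$, that $\|\mathrm{ad}_{D_A}^{\mu}(D_B)\,w\|_{H^2}$ is controlled by a polynomial in $\|w\|_{H^{2\mu+2}}$; this uses that each commutator with $\Delta$ adds at most two spatial derivatives and that products are handled by the Moser-type bound $\|fg\|_{H^s}\leq C\big(\|f\|_{L^\infty}\|g\|_{H^s}+\|g\|_{L^\infty}\|f\|_{H^s}\big)$. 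This yields a local error of order $h^{p-1}$ in $H^2$ with constant depending on $M_{2p-2}$, and combined with stability it produces the estimate~\eqref{NLSboundXtwo} together with a uniform $H^2$-bound on $\{u_n\}$ independent of the step size.

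The second and third stages repeat the same scheme in the weaker norms $H^1$ and $L^2$. With $\{u_n\}$ already known to lie in a fixed $H^2$-ball, the nonlinear subflow inherits $H^1$- and $L^2$-stability with constants depending only on the $H^2$-bound of exact and numerical solutions, so both cases reduce to a local error estimate and a Gronwall argument. The commutator analysis is essentially identical; only the regularity bookkeeping shifts, requiring $M_{2p-1}$ to produce a local error of order $h^p$ in $H^1$ and $M_{2p}$ to produce one of order $h^{p+1}$ in $L^2$. These deliver~\eqref{NLSboundXone} and~\eqref{NLSboundXzero}, respectively.

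The main obstacle is the combinatorial bookkeeping underlying the commutator estimates. Expanding $\mathrm{ad}_{D_A}^{\mu_\ell}(D_B)$ applied to a test vector produces, via the chain rule on $uv^2$ and the Leibniz rule on $\Delta$, a large sum of monomials in derivatives of $u$ and $v$ whose total differential order is at most $2\mu_\ell+1$; one must verify that in each monomial at most one factor carries the maximal derivative order while all other factors are controlled in $L^\infty$ through the $H^2$-embedding. Once this is checked (it essentially amounts to an iterated application of the Moser inequality along the product tree), the finite sum in~\eqref{errorexpansion} is bounded by a polynomial in the claimed Sobolev norm of $u$, and the three convergence statements follow as stated.
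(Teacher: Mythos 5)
Your proposal follows essentially the same route as the paper: the three-stage bootstrap (first $H^2$, then $H^1$, then $L^2$) borrowed from the cubic Schr\"odinger analysis of~\cite{lubich07,knth10a}, combined with the local error representation~\eqref{errorexpansion} and bounds of the iterated commutators $\mathrm{ad}_{D_A}^{\mu}(D_B)$ in $H^m$ by the $H^{m+2\mu}$-norm via the Sobolev embeddings on the torus, which is exactly the content of the paper's estimates~\eqref{komm1hm}--\eqref{komm2hm}, obtained there by explicit computation of the first two commutators for $p=2$ and extended inductively. The only cosmetic deviation is that you place the linear reaction terms in $B$ rather than in $A$ as the paper does, which does not affect the commutator bookkeeping or the resulting regularity requirements.
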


\begin{proof}
We work out the analysis in detail for the case $p=2$,
the general case is proven analogously. For the analysis, we write the Gray--Scott system in the partitioned form
\begin{equation}\label{gs}
\pat U(x,y,t) = AU(x,y,t) + B(U(x,y,t)),\qquad U(x,y,0) = U_0(x,y),\qquad (x,y) \in[-\pi,\pi]^2,
\end{equation}
where
\begin{eqnarray*}
&& U(x,y,t) = \left(\begin{array}{c}
                u(x,y,t)\\
                v(x,y,t)\end{array}\right),\\
&&   A\,U(x,y,t) = \left( \begin{array}{cc}
                c_1 \Delta - \alpha & 0 \\
                0 & c_2 \Delta - \beta
             \end{array} \right) U(x,y,t)
             +
             \left( \begin{array}{c}
                \alpha \\
                0
             \end{array} \right), \\
&&   B(U(x,y,t)) = \left( \begin{array}{c}
                -u(x,y,t)v^2(x,y,t) \\
                u(x,y,t)v^2(x,y,t)
             \end{array} \right).
\end{eqnarray*}

Stability is shown in the same manner as for the cubic Schr{\"o}dinger equation~\cite{knth10a},
see the outline above.
To bound the local error, we compute the commutators of the vector fields. This yields
\begin{eqnarray*}
[A,B](U) &=& AB(U) - B'(U)AU \\
         &=& \left( \begin{array}{cc}
                    c_1 \Delta - \alpha & 0 \\
                    0 & c_2 \Delta - \beta
                    \end{array} \right)
             \left( \begin{array}{c}
                    -u v^2 \\
                    u v^2
                    \end{array} \right) + \\
         && + \left(\begin{array}{cc}
                    v^2 & 2uv \\
                    -v^2 & -2uv
                    \end{array} \right)
              \left( \begin{array}{c}
                    (c_1 \Delta - \alpha)u \\
                    (c_2 \Delta - \beta) v
                     \end{array} \right)  \\
         &=& \left( \begin{array}{c}
                    -c_1 \Delta(uv^2) + v^2 c_1 \Delta u + 2 uv(c_2 \Delta v - \beta v) \\
                    (c_2 \Delta - \beta) uv^2 - v^2(c_1 \Delta - \alpha) u - 2uv(c_2\Delta - \beta) v
                    \end{array} \right)
\\
         &=& \left( \begin{array}{c}
                    2(c_2-c_1)uv\Delta v -4c_1 v \nabla u \cdot \nabla v -2c_1 u \nabla v \cdot \nabla v +2c_1 \beta u v^2 \\
                    (c_2 - c_1) v^2\Delta u + 4 c_2 v \nabla u\cdot \nabla v +2c_2 u \nabla v \cdot \nabla v + (\alpha+\beta) u v^2
                    \end{array} \right).
\end{eqnarray*}
This can be estimated in Sobolev norms by resorting to the embeddings in Appendix~\ref{sec:sobemb}:
\begin{eqnarray}
\left\| [A,B](U) \right\|_{H^m} &\leq& C(\| U \|_{H^{m+2}} ),\quad m=0,1,\dots. \label{komm1hm}
\end{eqnarray}
For the second commutator we compute
\begin{eqnarray*}
B'(U)W &=& \left( \begin{array}{cc}
                    -v^2 & -2uv \\
                     v^2 &  2uv
                  \end{array} \right)
            \left( \begin{array}{c} w_1 \\ w_2 \end{array} \right) \\
       &=& \left( \begin{array}{c}
                    -v^2 w_1 - 2uvw_2 \\
                    v^2 w_1 + 2 u v w_2
                  \end{array} \right) \\
B''(U)(W,Z) &=& \left( \begin{array}{cc}
                    0 & -2uw_2 \\
                    0 &  2uw_1
                  \end{array} \right)
            \left( \begin{array}{c} z_1 \\ z_2 \end{array} \right) \\
            &=& \left( \begin{array}{c}
                    -2uw_2z_1 \\
                    2uw_1z_2
                  \end{array} \right), \\
A^2\,U &=& \left( \begin{array}{c}
                     (c_1 \Delta - \alpha)^2 u \\ (c_2 \Delta - \beta)^2 v
                 \end{array} \right) +
                 \left( \begin{array}{c}
                     \alpha (c_1 \Delta - \alpha +1 ) \\ 0
                 \end{array} \right)
\end{eqnarray*}
and hence
\begin{eqnarray*}
[A,[A,B]](U) &=& A^2 B(U) - 2AB'(U)AU + B''(U)(AU,AU) + B'(U)A^2U
\end{eqnarray*}
contains terms of the form $uv\Delta^2 u$ and $uv\Delta^2v$ which do not cancel.
Consequently,
\begin{eqnarray}
\left\| [A,[A,B]](U) \right\|_{H^m} &\leq& C(\| U \|_{H^{m+4}} ),\quad m=0,1,\dots.  \label{komm2hm}
\end{eqnarray}
Inductively, the result for higher commutators appearing in estimates for
higher-order splitting methods follows. \qquad \qed
\end{proof}

\subsection{Numerical results}

In this section, we will demonstrate the accuracy of several splitting schemes for the Gray--Scott equation
\eqref{grayscott} by computing the convergence orders
with an underlying Fourier pseudospectral space discretization at $512\times512$ points. {{The nonlinear terms in the equation are propagated
using an explicit fourth order Runge-Kutta method.}} For these experiments, the parameters in
\eqref{grayscott} were chosen as $\alpha=0.038,\ \beta=0.114,\ c_1= 0.04,\ c_2=0.005.$
We will investigate the pair \cite[\scheme{Milne\;2/2\;c\;(i)}]{splithp}, and the optimized palindromic fourth order method
\cite[\scheme{Emb\;4/3\;A\;c}]{splithp}. The error estimators are based on the Milne device (Section~\ref{subsec:milne}), and
the embedding idea (Section~\ref{subsec:embed}), respectively. Figure~\ref{fig:milne22ci} gives
the error of the method \cite[\scheme{Milne\;2/2\;c\;(i)}]{splithp} and the error of the associated
error estimator as well as the global error of the time integration. The empirical convergence order
can be observed by comparing the computed data points with the solid line representing the theoretical order
extrapolated from the most accurate approximation. Figure~\ref{fig:emb43} gives the same data
for the integrator from \cite[\scheme{Emb\;4/3\;A\;c}]{splithp} and associated error estimator. Errors are calculated with respect to a reference solution computed by \cite[\scheme{Emb\;4/3\;A\;c}]{splithp} with time-step $h=7.81\cdot 10^{-3}$.
The empirical orders illustrate the theoretical result in Theorem~\ref{theorem1}.

\begin{figure}[h!]
\begin{center}
\includegraphics[width=8.5cm]{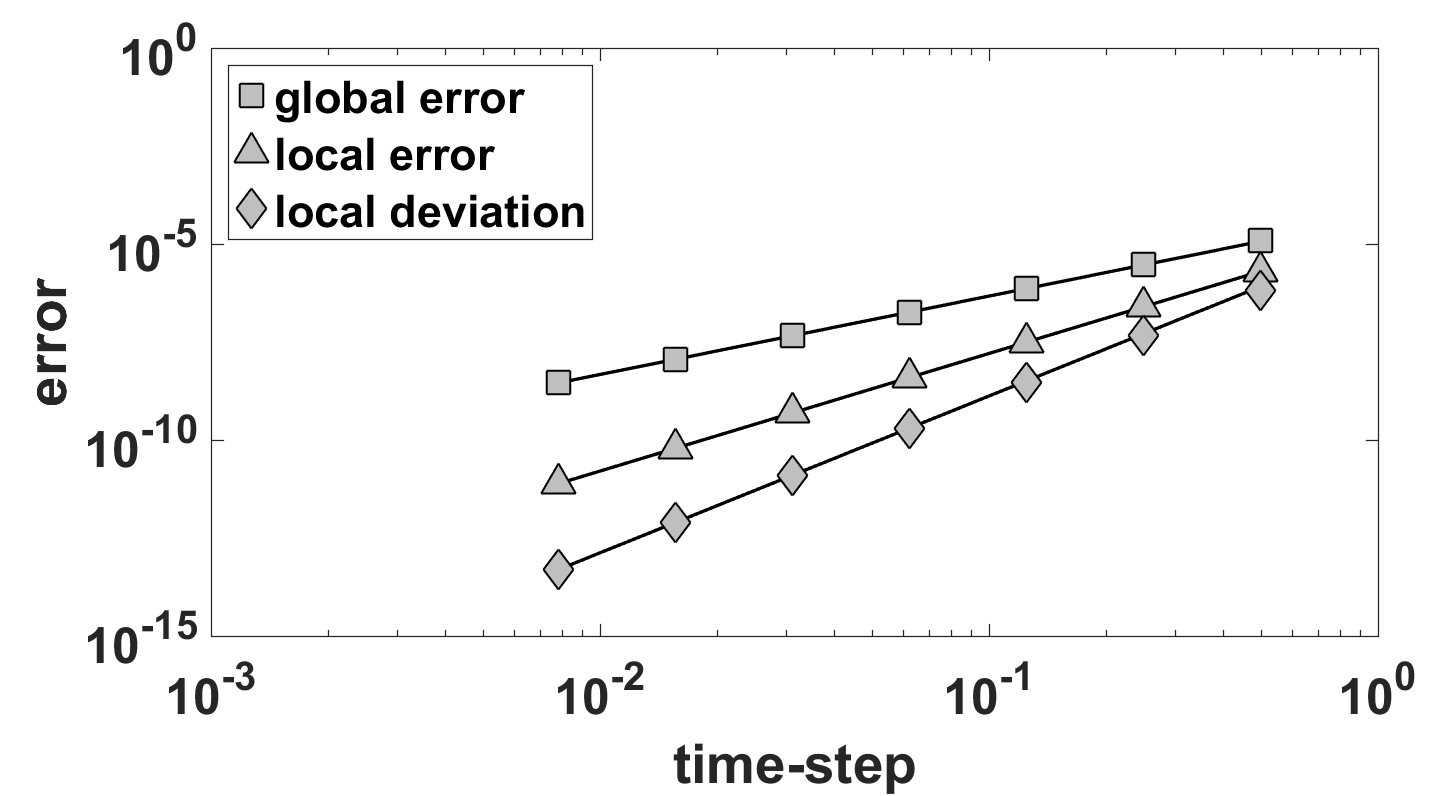}
\caption{Empirical convergence orders of the local and the global errors
and deviation of the local error estimator
for the \cite[\scheme{Milne\;2/2\;c\;(i)}]{splithp} splitting applied to the
Gray--Scott equation~\eqref{grayscott}.\label{fig:milne22ci}}
\end{center}
\end{figure}

\begin{figure}[h!]
\begin{center}
\includegraphics[width=8.4cm]{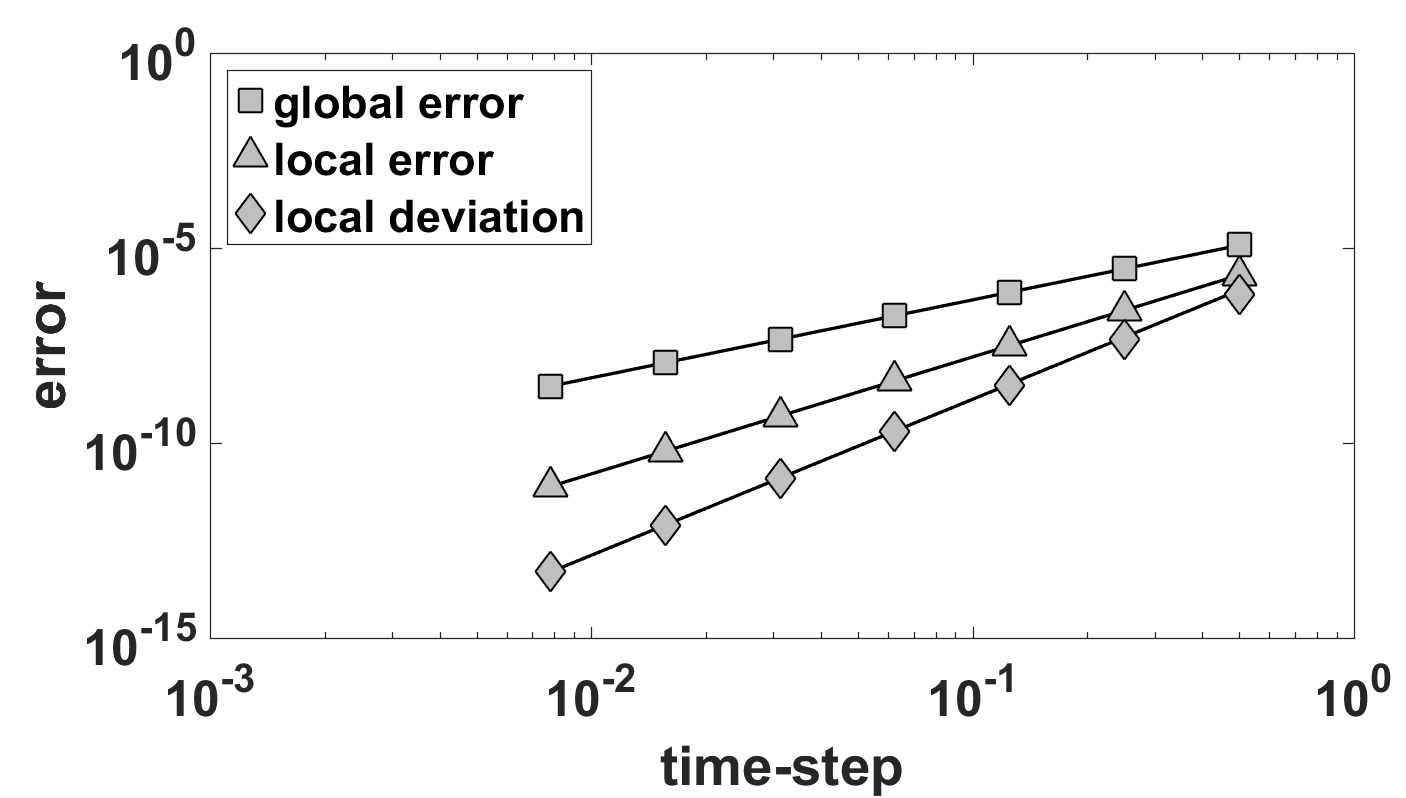}
\caption{Empirical convergence orders of the local and the global errors for the \cite[\scheme{Emb\;4/3\;A\;c}]{splithp} splitting
and deviation of the local error estimator applied to the
Gray--Scott equation~\eqref{grayscott}.\label{fig:emb43}}
\end{center}
\end{figure}

The time-steps generated in the course of an adaptive procedure are given in Figure~\ref{table:steps-gs}.
The left plot shows the time-steps to satisfy a tolerance of $10^{-5}$ for the \cite[\scheme{Milne\;2/2\;c\;(i)}]{splithp}
method, and likewise on the right for the \cite[\scheme{Emb\;4/3\;A\;c}]{splithp} pair.

\begin{figure}[h!]
\begin{center}
\includegraphics[width=8.1cm]{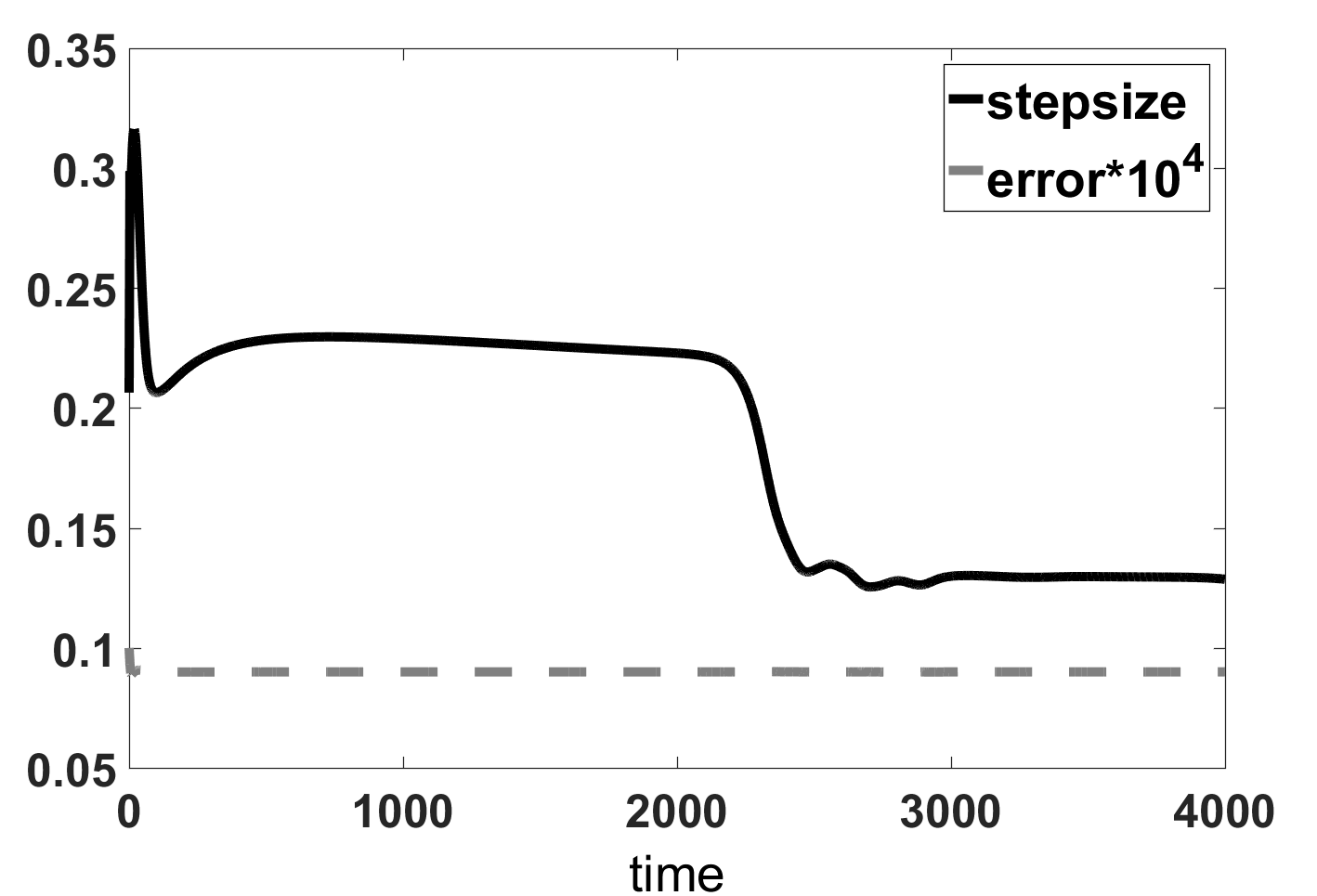}

\vspace{3mm}

\includegraphics[width=8.1cm]{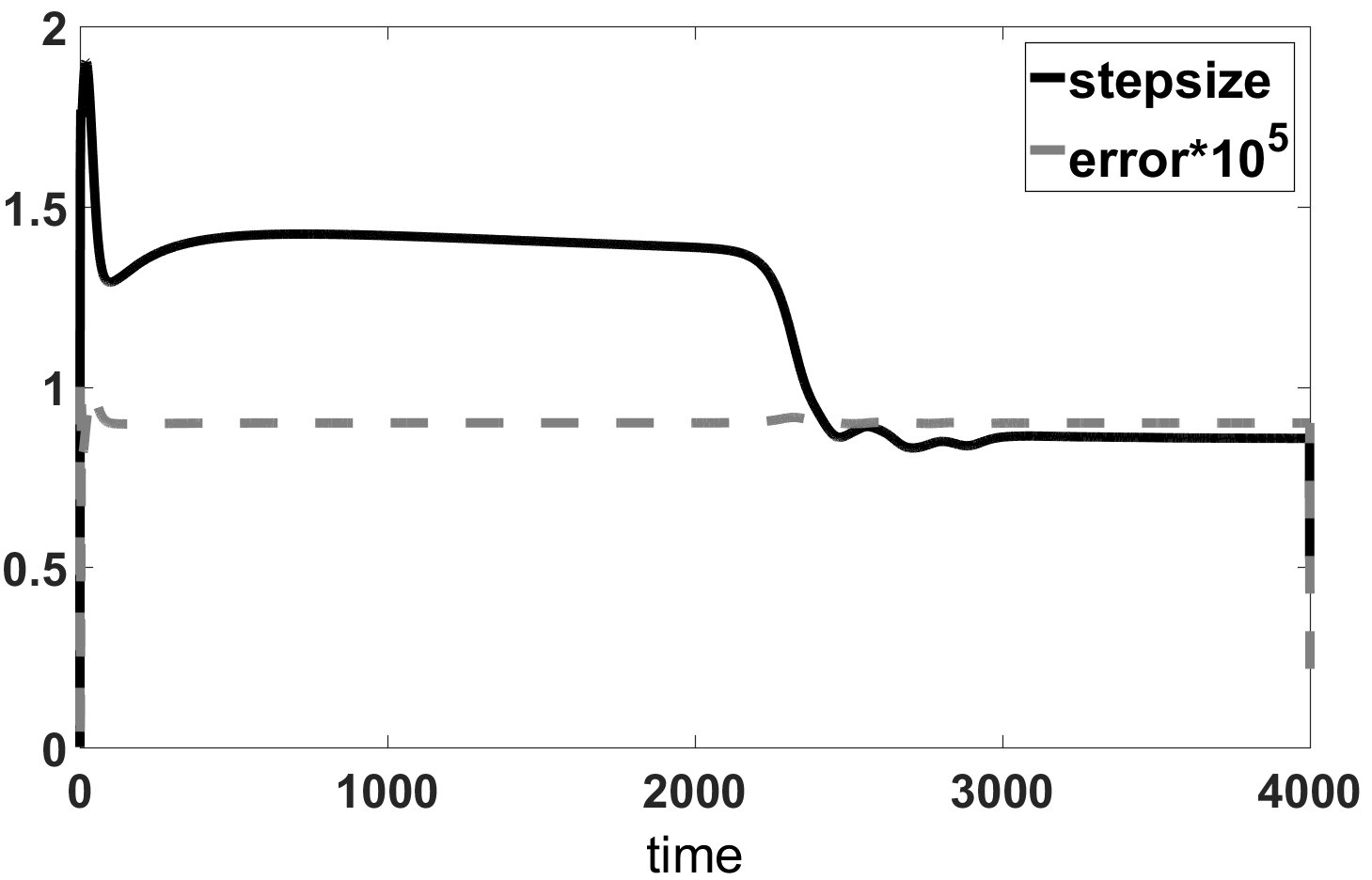}
\caption{Time-steps for~\eqref{grayscott} for \cite[\scheme{Milne\;2/2\;c\;(i)}]{splithp} (top),
\cite[\scheme{Emb\;4/3\;A\;c}]{splithp} (bottom), tolerance $10^{-5}$.\label{table:steps-gs}}
\end{center}
\end{figure}

\subsection{Comparisons}\label{subsec:compare_gs}

After verifying the reliability of the investigated solution methods, we will assess
the efficiency of the adaptive time integration methods by giving a comparison to
the situation where the same accuracy is achieved with constant time-steps.
Moreover, we will compare the efficiency of adaptive time integration
based on the second order method in conjunction with the Milne device
as compared to the fourth order embedded splitting pair \cite[\scheme{Emb\;4(3)\;A\;c}]{splithp}
and the palindromic scheme \cite[\scheme{PP\;3/4\;A\;c}]{splithp}. By construction, the latter also
provides an asymptotically correct error estimator, which by its special
structure is cheap to evaluate.
\begin{rev}Runtime was measured on a PC
with Intel Core i7-2600 3, 4GHz Quad-Core processor with 16 GB RAM:\end{rev}
Table~\ref{efficiencygs} shows the number of steps required in the adaptive
integration, the number of equidistant steps with the smallest necessary
adaptive time-step, and the computing time for both scenarios. The tolerances
were chosen as $10^{-5}$ (top) and $10^{-8}$ (bottom), respectively.
We observe that indeed the adaptive methods require fewer steps, but the
overall computational cost is higher due to the effort for the evaluation of the
error estimator in each step. This suggests an adaptive strategy which
does not estimate the error in each step, but only after a certain number
of steps with a fixed time-step. This is also supported by the
fact that a measurement of the computation time for the \cite[\scheme{Milne\;2/2\;c\;(i)}]{splithp}
method on 1000 equidistant steps yielded $75.45$ seconds, in conjunction
with the error estimator the computation time amounted to $123.43$ seconds.
The same experiment for the \cite[\scheme{Emb\;4/3\;A\;c}]{splithp} method yielded
$162.18$ and $238.36$ seconds, respectively.
For \cite[\scheme{PP\;3/4\;A\;c}]{splithp} the runtimes were $110.75$ seconds and $193.68$ seconds, respectively.
This implies that an update
of the time-steps every two or three steps should provide a more efficient
strategy, but possibly at the cost of reduced numerical stability,
\begin{rev}
since this example shows rather smooth solution dynamics. Indeed,
the step-size is adjusted rapidly by exploiting the maximally permitted
increase by a factor of 4 from a too small initial guess
to the appropriate value, which is assumed throughout the rest
of the computation, see Figure~\ref{fig:change_gs}, which gives the
quotient of two consecutive step-sizes over the integration interval.
This behavior demonstrates one major advantage of adaptivity, that
an unsuitable initial guess of the step-size is automatically adjusted to an optimal value.
\end{rev}

\begin{table}[h!]
\begin{center}
\begin{tabular}{||r||r|r|r|r||}
\hline
Method & \# steps adaptive & \# steps equidist & time adaptive & time equidist \\
\hline
\scheme{Milne\;2/2\;c\;(i)}, \textrm{tol}$=10^{-5}$ & $406$ & $486$ & $57.04$ & $28.21$ \\
\scheme{Emb\;4/3\;A\;c}, \textrm{tol}$=10^{-5}$     & $67$ & $79$ & $17.72$ & $11.46$ \\
\scheme{PP\;3/4\;A\;c}, \textrm{tol}$=10^{-5}$      & $116$ & $135$ & $23.02$ & $12.99$ \\ \hline
\scheme{Milne\;2/2\;c\;(i)}, \textrm{tol}$=10^{-8}$ & $4691$ & $5625$ & $878.72$ & $503.93$ \\
\scheme{Emb\;4/3\;A\;c}, \textrm{tol}$=10^{-8}$     & $516$ & $612$ & $174.30$ & $128.19$ \\
\scheme{PP\;3/4\;A\;c}, \textrm{tol}$=10^{-8}$      & $929$ & $1107$ & $195.87$ & $106.79$ \\
\hline
\end{tabular}
\caption{Comparison of the efficiency of \cite[\scheme{Milne\;2/2\;c\;(i)}]{splithp},
\cite[\scheme{Emb\;4/3\;A\;c}]{splithp}, and \cite[\scheme{PP\;3/4\;A\;c}]{splithp}
for~\eqref{grayscott}.
The tolerances were $10^{-5}$ (top) and $10^{-8}$ (bottom), respectively.\label{efficiencygs}}
\end{center}
\end{table}

\begin{figure}
\begin{center}
\includegraphics[width=8cm]{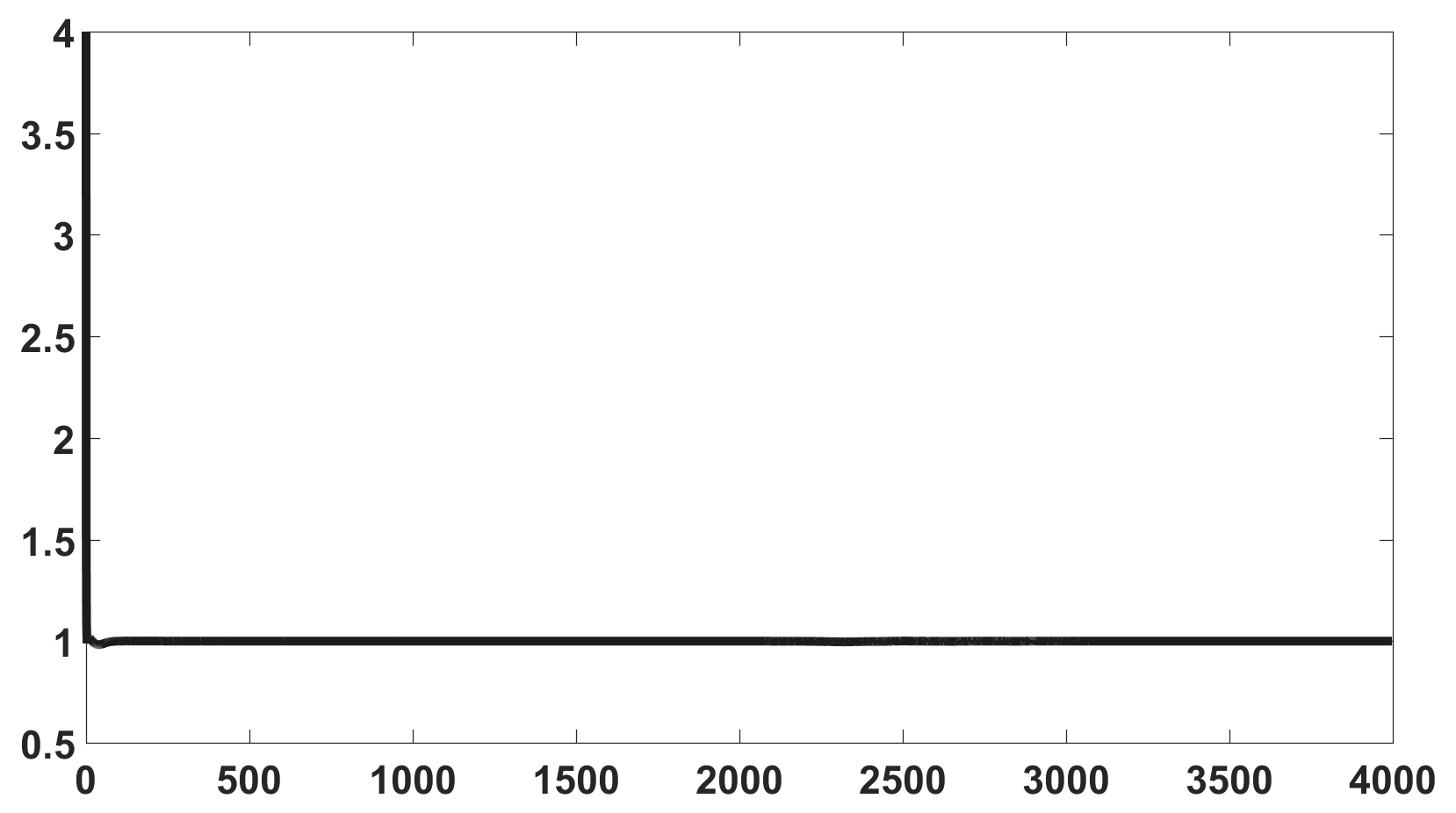}
\caption{Ratio of two consecutive time step-sizes for the solution of~\eqref{grayscott} by
\cite[\scheme{Emb 4/3 A c}]{splithp}.\label{fig:change_gs}}
\end{center}
\end{figure}

\section{The Van der Pol equation}\label{sec:VP}

The Van der Pol equation is an ordinary differential equation with limit cycle behavior. It is used as a test of time integration schemes for stiff differential equations. It shares characteristics with simple models for cardiac behavior. The Van der Pol equation is usually considered as an ordinary differential equation, but by adding diffusion terms, one can consider an extension from a set of ordinary differential equations to a pair of coupled partial differential equations with spatial dependence.

It is given by
\begin{subequations}\label{VanderPol}
\begin{eqnarray}
\hspace{-9mm}&& \pat u(x,t) = D_u\, \Delta u(x,t) +  \,  v(x,t), \\
\hspace{-9mm}&& \pat v(x,t) = D_v\, \Delta v(x,t) + \,  \frac{1}{\epsilon}\left[ ( 1-u^2(x,t)) \, v(x,t)  - u(x,t) \right].
\end{eqnarray}
\end{subequations}
It is split into
\begin{subequations}\label{VanderPolA}
\begin{eqnarray}
\hspace{-9mm}&& \pat u(x,t) = D_u\, \Delta u(x,t) +  \,  v(x,t), \\
\hspace{-9mm}&& \pat v(x,t) =  D_v\, \Delta v(x,t) + \, \frac{1}{\epsilon} ( v(x,t) - u(x,t) ) ,
\end{eqnarray}
\end{subequations}
and
\begin{subequations}\label{VanderPolB}
\begin{eqnarray}
\hspace{-9mm}&& \pat v(x,t) =  - \frac{1}{\epsilon}u^2(x,t) \, v(x,t).
\end{eqnarray}
\end{subequations}
\begin{rev}
The convergence result for an order $p$ splitting applied to this system can
readily be seen to be the same as Theorem~\ref{theorem1}. However, the constants
in the estimates \eqref{NLSboundXzero}--\eqref{NLSboundXtwo} depend on
the small parameter $\epsilon$, $C=C(M_{2p},\epsilon^{-p})$ in
\eqref{NLSboundXzero}, $C=C(M_{2p-1},\epsilon^{1-p})$
in~\eqref{NLSboundXone},
and $C=C(M_{2p-2},\epsilon^{2-p})$ in~\eqref{NLSboundXtwo}.
We must stress that the involved estimates of the exact solution
will also be negatively influenced when $\epsilon$ is small.
The analysis of the exact solution is not a topic of the present paper, however.

For our comparisons, we solve the problem in one spatial dimension, with $x\in [-\pi,\pi]$, and choose $\epsilon=10^{-3}$.
The evolution of the solution components with $t$ (on the vertical axis) is illustrated in Figure~\ref{figure:vp-pde}.
Results showing the effectiveness of adaptive time stepping for~\eqref{VanderPol} are shown in Table~\ref{efficiencyVPpde}.
For this problem, the lower order method is more efficient. Adaptive step selection
yields a speed-up by about a factor 5. Indeed, if we consider the ratio of two consecutive
step-sizes, we see some variation in the region of the steep layers in Figure~\ref{fig:change_vdp},
which is obviously sufficiently large to warrant adaptive time-stepping.

\begin{table}[h!]
\begin{center}
\begin{tabular}{||r||r|r|r|r||}
\hline
Method & \# steps adaptive & \# steps equidist & time adaptive & time equidist \\
\hline
\scheme{PP\;5/6\;A\;c}, \textrm{tol}$=10^{-3}$ & $ 11886 $ & $ 127118 $ & $ 6.41\mathrm{e}+03 $ & $ 3.14\mathrm{e}+04 $ \\
\scheme{PP\;3/4\;A\;c}, \textrm{tol}$=10^{-3}$ & $ 20989 $ & $ 217760 $ & $ 4.32\mathrm{e}+03 $ & $ 2.09\mathrm{e}+04 $ \\
\hline
\scheme{PP\;5/6\;A\;c}, \textrm{tol}$=10^{-5}$ & $ 124559 $ & $ 1269018 $ & $ 6.20\mathrm{e}+04 $ & $ 3.13\mathrm{e}+05 $ \\
\scheme{PP\;3/4\;A\;c}, \textrm{tol}$=10^{-5}$ & $ 214338 $ & $ 2176945 $ & $ 3.81\mathrm{e}+04 $ & $ 2.09\mathrm{e}+05 $\\
\hline
\end{tabular}
\caption{Comparison of the efficiency of \cite[\scheme{PP\;5/6\;A\;c}]{splithp}, and
\cite[\scheme{PP\;3/4\;A\;c}]{splithp}
for~\eqref{VanderPol} with $D_u=D_v=1$ and $\epsilon=10^{-3}$, $256$ grid points . The final time was 10.0, with
initial condition $u(x,0)=\exp(-x^2)$ and $v(x,0)=0.2\exp(-(x+2)^2)$ and $x\in[-\pi,\pi]$.\label{efficiencyVPpde}}
\end{center}
\end{table}

\begin{figure}[h!]
\begin{center}
\hspace{-5mm}
\includegraphics[width=6.5cm]{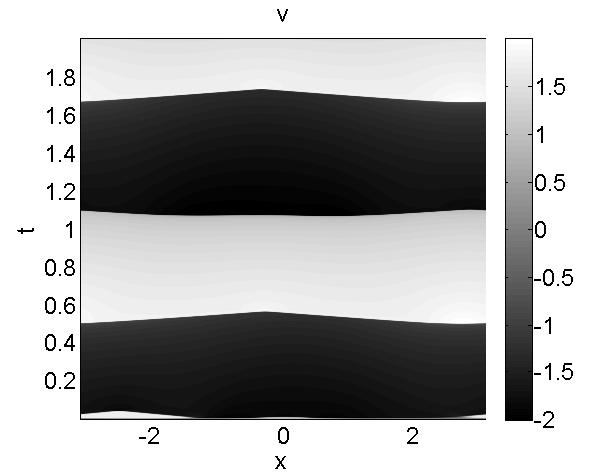}
\hspace{-5mm}
\includegraphics[width=6.5cm]{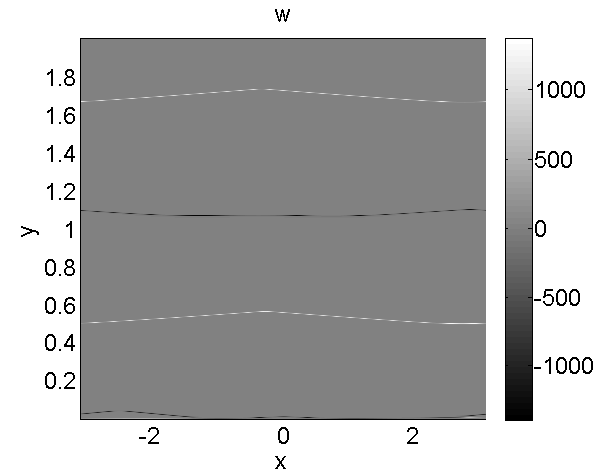}
\caption{Numerical solution for~\eqref{VanderPol} with $D_u=D_v=1$ and $\epsilon=10^{-3}$, $256$ grid points, with
initial condition $u(x,0)=\exp(-x^2)$ and $v(x,0)=0.2\exp(-(x+2)^2)$ and $x\in[-\pi,\pi]$ (left: $u$; right: $v$).\label{figure:vp-pde}}
\end{center}
\end{figure}

The time-steps generated in the course of an adaptive procedure are given in Figure~\ref{table:steps-vdp}.
The left plot shows the time-steps to satisfy a tolerance of $10^{-5}$ for the \scheme{PP 3/4 A c}
method, and likewise on the right for the \cite[\scheme{PP 5/6 A c}]{splithp} pair.

\begin{figure}
\begin{center}
\includegraphics[width=8cm]{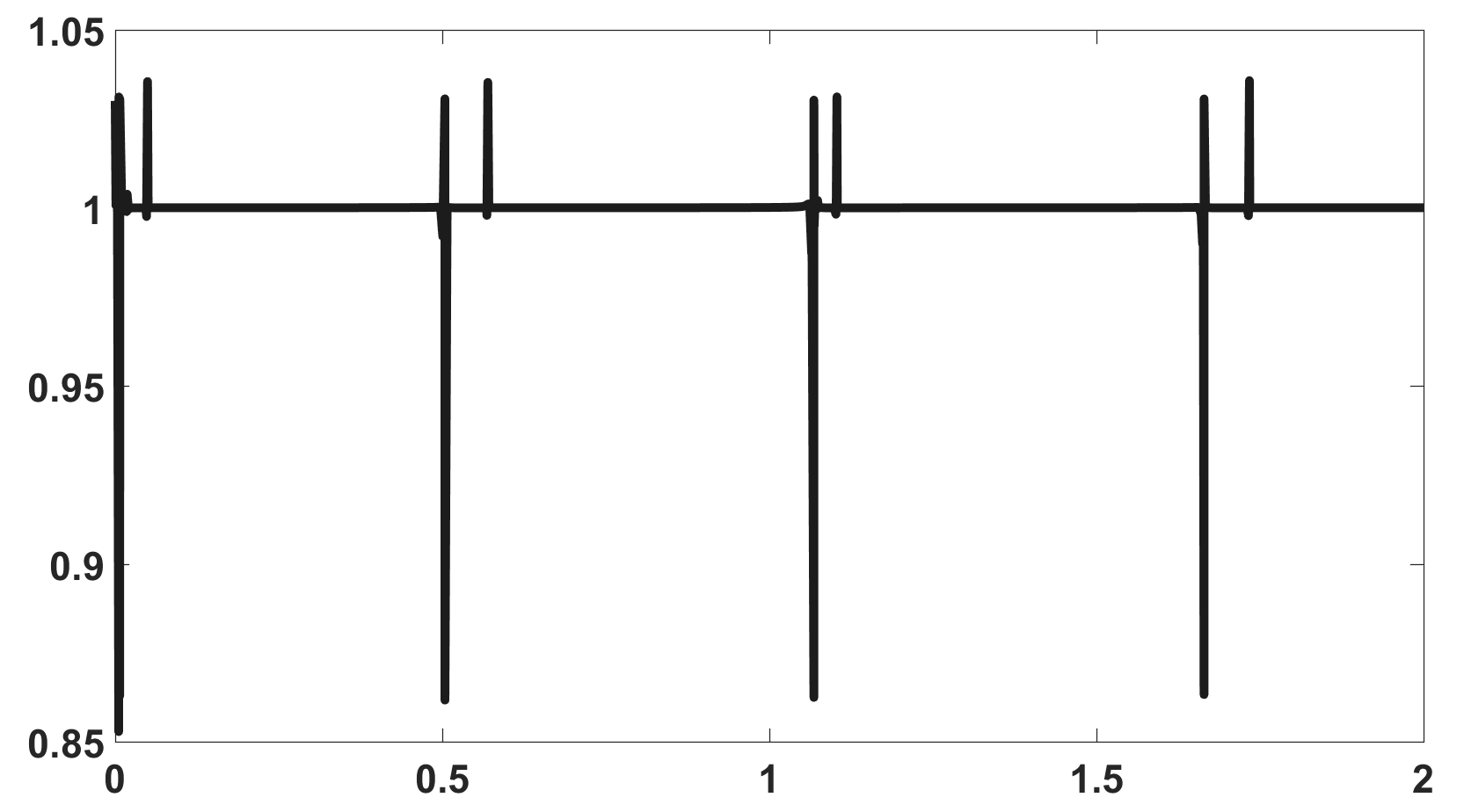}
\caption{Ratio of two consecutive time step-sizes for the solution
of~\eqref{VanderPol} by
\cite[\scheme{PP 5/6 A c}]{splithp}.\label{fig:change_vdp}}
\end{center}
\end{figure}

\begin{figure}[h!]
\begin{center}
\includegraphics[width=8.0cm]{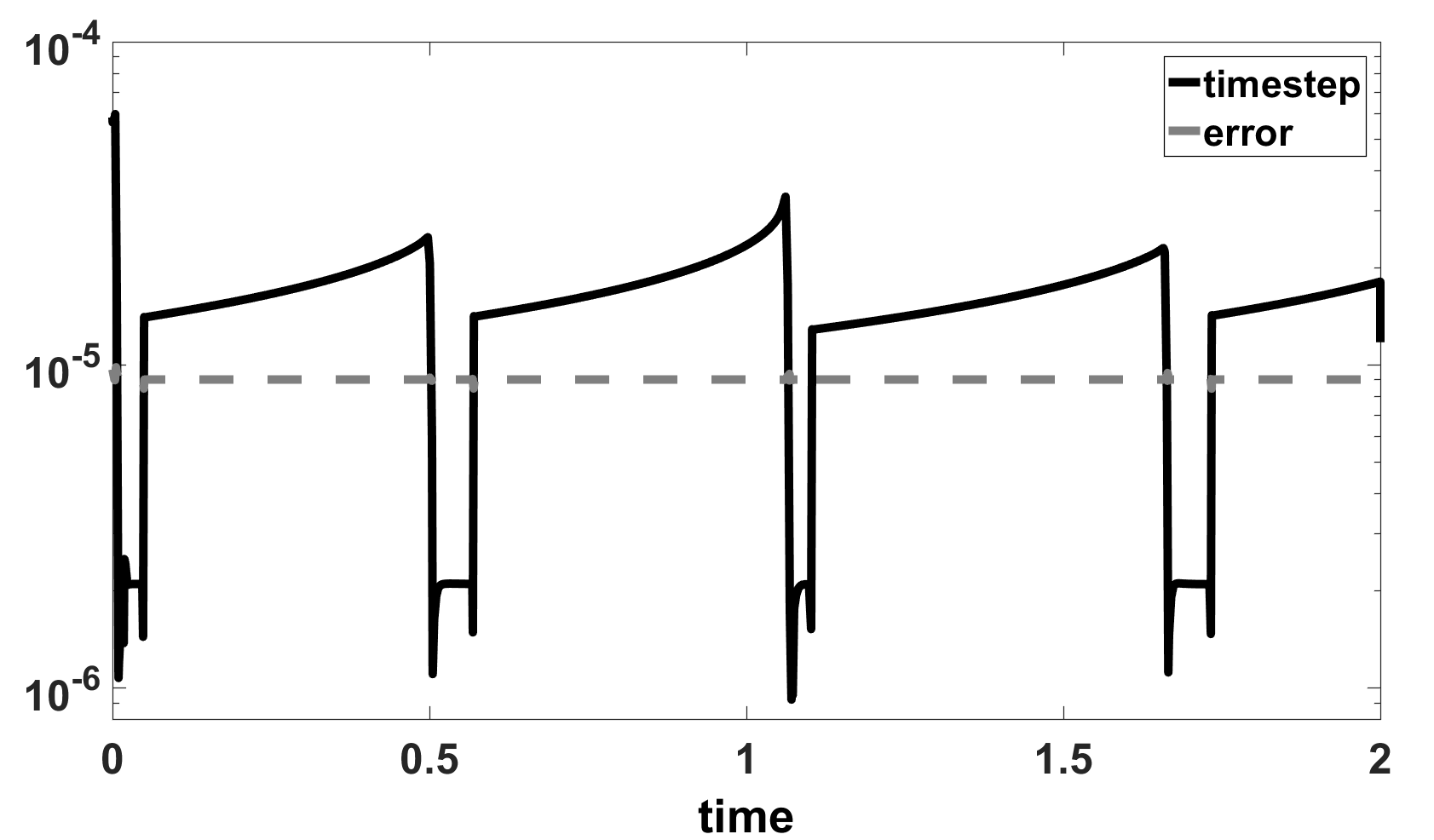}

\vspace{3mm}

\includegraphics[width=8.0cm]{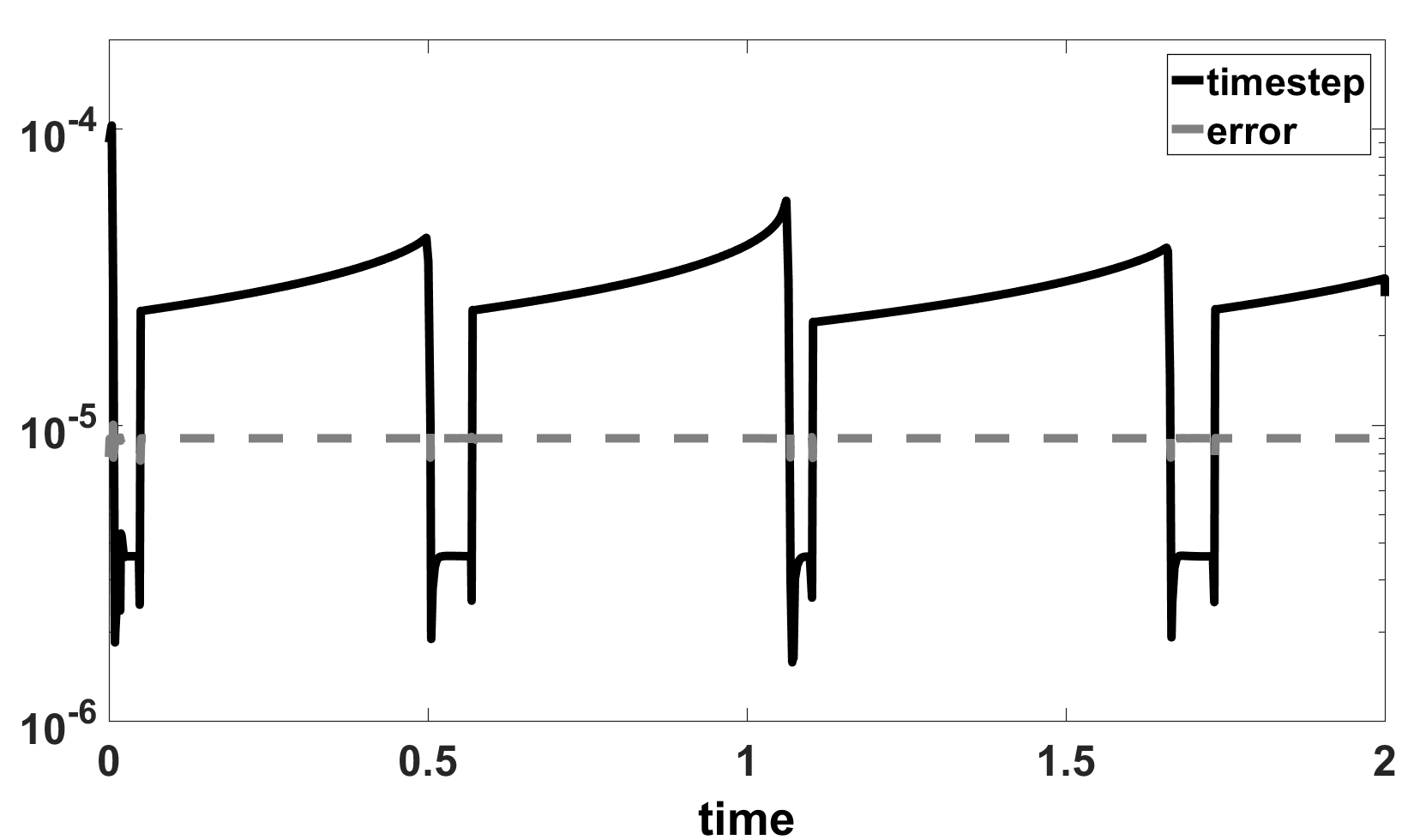}
\caption{Time-steps for~\eqref{VanderPol} for \cite[\scheme{PP 3/4 A c}]{splithp} (top),
\cite[\scheme{PP 5/6 A c}]{splithp} (bottom), tolerance $10^{-5}$.\label{table:steps-vdp}}
\end{center}
\end{figure}

\end{rev}

\section{Splitting into three operators (`$ABC$-splitting')}\label{sec:abc}
Finally, we consider a splitting of the Gray--Scott equations~\eqref{grayscott} into three parts,
\begin{equation*}
\underbrace{\left( \begin{array}{cc}
                c_1 \Delta - \alpha & 0 \\
                0 & c_2 \Delta - \beta
             \end{array} \right) U(x,y,t)
             +
             \left( \begin{array}{c}
                \alpha \\
                0
             \end{array} \right)}_{=A}
  +
  \underbrace{\left( \begin{array}{c} 0\\ u(x,y,t) v^2(x,y,t)\end{array}\right)}_{=B}
  -
  \underbrace{\left( \begin{array}{c} u(x,y,t) v^2(x,y,t)\\ 0\end{array}\right)}_{=C}.
\end{equation*}
This has the computational advantage that the flows of the operators $B$ and $C$ can be
computed analytically when the other component is frozen. Below we verify the convergence
orders for this case for the optimal palindromic splitting \scheme{PP\;3/4\;A\;3\;c}.

\begin{rev}
\textit{Remark:}\/ A formal error analysis for $ABC$-splitting has not yet been given
in the nonlinear case, the linear case has been treated in~\cite{auzingeretal14a}.
However, inspection of the commutators that would critically influence
the error shows that a convergence result analogous to Theorem~\ref{theorem1}
will hold, since commutators of $B$ and $C$ vanish.
\end{rev}

\subsection{Numerical results}

The numerical results below were computed by the method \cite[\scheme{PP\;3/4\;A\;3\;c}]{splithp}. This is the method
of order~3 with the smallest leading error coefficients (see~\cite{part1}) we could determine and offers
the advantage of the cheap error estimator from Section~\ref{subsec:palindromic},
\begin{rev}
see Figure~\ref{fig:ppabc}.
\end{rev}

\begin{figure}[h!]
\begin{center}
\includegraphics[width=12.0cm]{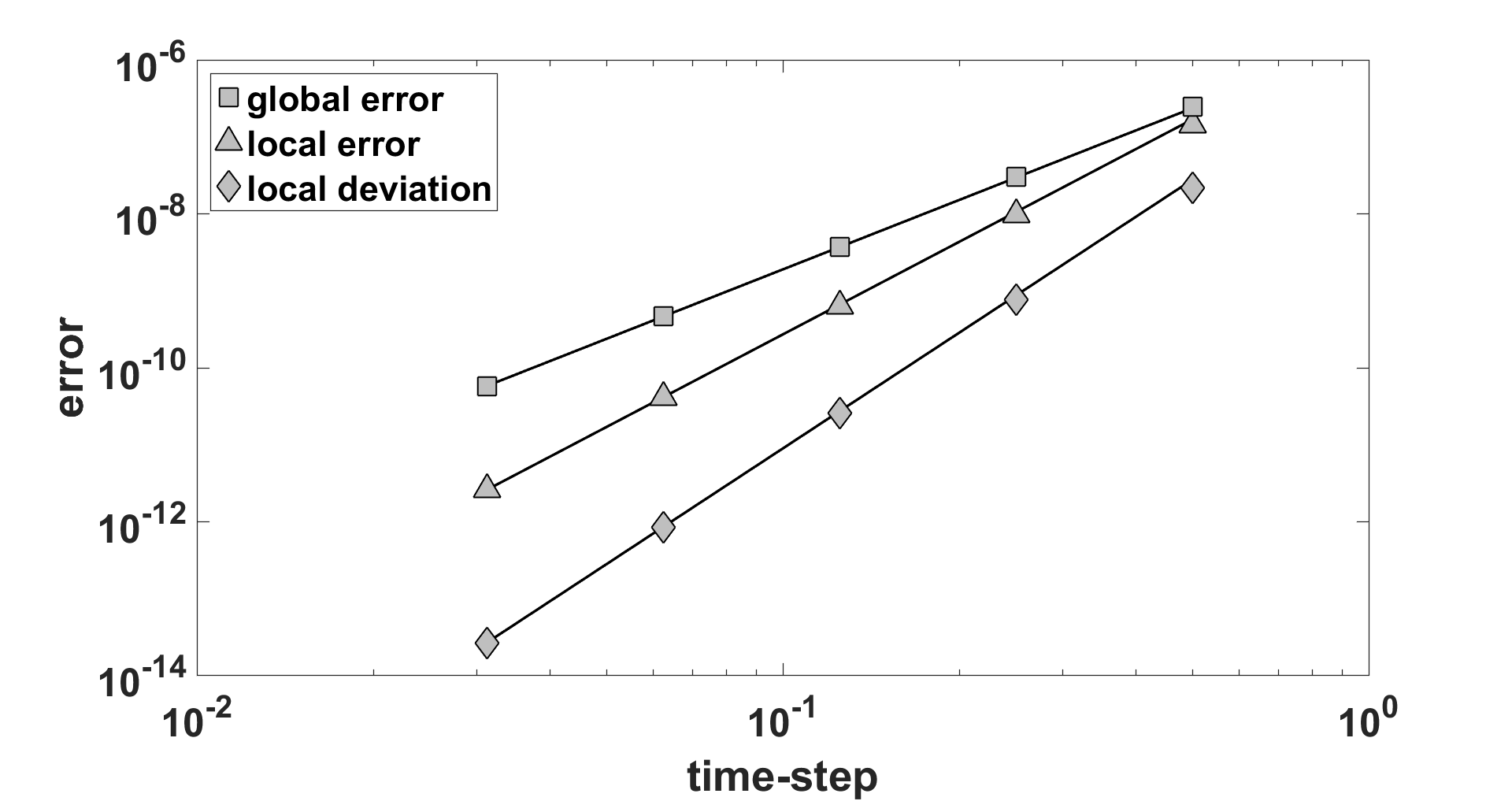}
\caption{Empirical convergence orders of the local and the global errors for
the \cite[\scheme{PP\;3/4\;A\;3\;c}]{splithp} splitting applied to the
Gray--Scott equation~\eqref{grayscott}.\label{fig:ppabc}}
\end{center}
\end{figure}

The time-steps generated in the course of an adaptive procedure
according to Section~\ref{subsec:adaptstep} are given in Figure~\ref{table:steps-gsabc}.
The plot shows the step-sizes to satisfy a tolerance of $10^{-5}$ for the \scheme{PP\;3/4\;A\;3\;c}
method.

\begin{figure}[h!]
\begin{center}
\includegraphics[width=8.1cm]{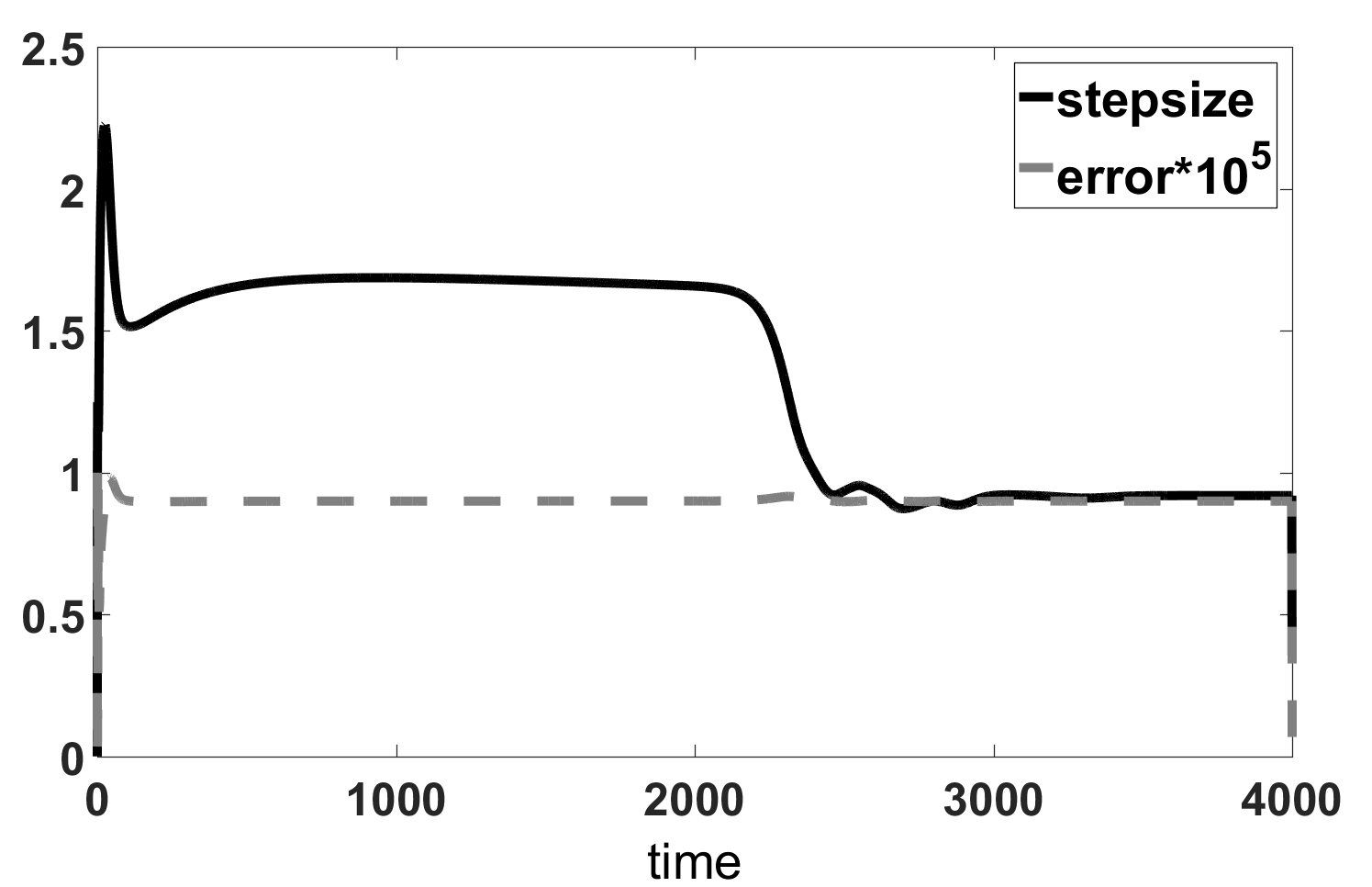}
\caption{Time-steps and local error for~\eqref{grayscott} for \cite[\scheme{PP\;3/4\;A\;3\;c}]{splithp}.\label{table:steps-gsabc}}
\end{center}
\end{figure}

\subsection{Comparisons}\label{subsec:compare_gsabc}

In order to compare the efficiency of the $ABC$-splitting approach with the two-operator splitting
discussed in Section~\ref{sec:gs}, in Table~\ref{efficiencygsabc} we give the number of steps
required for tolerances $10^{-5}$ and $10^{-8}$ and the resulting computation times.
It is observed that the $ABC$-splitting
\cite[\scheme{PP\;3/4\;A\;3\;c}]{splithp} requires slightly fewer steps than
\cite[\scheme{PP\;3/4\;A\;c}]{splithp}, but the
computation time is higher. The reason is that each individual step is computationally
more demanding in the $ABC$-splitting due to the larger number of required FFT transforms
associated with the larger number of compositions. These result from
the fact that the number of order conditions is larger in the $ABC$ case and
therefore, more free parameters are necessary to construct high-order methods.
Indeed, 1000 steps with \cite[\scheme{PP\;3/4\;A\;c}]{splithp} required $110.75$ seconds, for
\cite[\scheme{PP\;3/4\;A\;3\;c}]{splithp} the timing was $199.95$.
\begin{rev}
However, we stress again that a major advantage of the $ABC$-splitting approach for this
example lies in the fact that the computations of the nonlinear flows
can resort to analytical solutions instead of numerical approximations as in Section~\ref{sec:gs}.
\end{rev}

{
\begin{table}[h!]
\begin{center}
\begin{tabular}{||r||r|r|r|r||}
\hline
Method & \# steps adaptive & \# steps equidist & time adaptive & time equidist \\
\hline
{ \scheme{PP\;3/4\;A\;3\;c}, \textrm{tol}$=10^{-5}$ }    & $65$ & $67$ & $26.74$ & $13.23$ \\ \hline
{ \scheme{PP\;3/4\;A\;3\;c}, \textrm{tol}$=10^{-8}$ }    & $555$ & $645$ & $244.41$ & $138.38$ \\
\hline
\end{tabular}
\caption{Efficiency of the \cite[\scheme{PP\;3/4\;A\;3\;c}]{splithp} splitting
for~\eqref{grayscott}. The tolerances were $10^{-5}$ (top) and $10^{-8}$ (bottom), respectively.\label{efficiencygsabc}}
\end{center}
\end{table}
}


\section{Conclusions and outlook}

We have investigated high-order adaptive time-splitting methods for the solution of
nonlinear evolution equations of parabolic type under periodic
boundary conditions. The theoretical error analysis
\begin{rev}for the Gray--Scott equations and the Van der Pol equation\end{rev} shows
the classical convergence orders under regularity assumptions on the exact solution
implied by the Sobolev inequality for functions on the torus. The theory is illustrated
by numerical computations \begin{rev}showing the established convergence orders.\end{rev}

Moreover, adaptive time-stepping strategies \begin{rev}have been\end{rev} demonstrated
to improve both efficiency and reliability, where high-order methods generally yield a
computational advantage for the approximation of regular solutions. Local error estimators
based on embedded formulae of splitting coefficients are more efficient than estimators
\begin{rev}employing the adjoint method\end{rev}, but the former need
to be constructed especially by a computationally demanding optimization procedure,
while the latter principle can be applied invariantly for methods of odd order.

\begin{rev}Indeed, it has been observed that for problems with rapidly varying solutions,
an adaptive strategy yields an advantage as compared to uniformly using the
smallest time-step required locally. Secondly, a good guess of the time
step-size is not commonly available even when the solution is smooth, so adaptive adjustment saves from repeating
runs until the optimal step-size is found.
\end{rev}

Splitting into three operators promises a computational advantage for the calculation
of the individual compositions, but the complexity of high-order integrators of
this class implies a significant surplus of necessary compositions which negatively
affects the performance.

\appendix

\section{Periodic functions and their Fourier transforms} \label{sec:period}

In the following, we recapitulate material from~\cite{robinson01}
for the convenience of the reader. Consider
\begin{align*}
Q &=[-a,a]^d \quad \text{associated with the $ d $\,-\,dimensional torus in $ \CC^d $}, \nl
C^n &= \{u\!:\, Q \to \CC, ~~ u \in C^n(Q) ~\text{is a periodic function} \}.
\end{align*}
The space $ L^2 = L^2(Q) $ is a Hilbert space with the inner product
\begin{equation*}
{\il u,v \ir}_{L^2} = \int_Q \overline{u(x)}\,v(x)\,\dd x.
\end{equation*}
\paragraph{Fourier representation of $ u \in L^2 $}
Let $ k = (k_1,\ldots,k_d) \in \ZZ^d $, and $ |k| = |k_1| + \cdots + |k_d| $.
\begin{definition} 
The Fourier transform
$\pazocal{F}: L^2(Q)\to {\mathbb Z}^d,\ u \mapsto \pazocal{F}(u) = (c_k)_{k\in \mathbb Z}$ is defined by
\begin{equation*}
c_k := \frac{1}{{(2\,a)}^d} \int_Q u(x)\,\ee^{-\ii\,(k \d x)/a}\,\dd x, \quad k \in \ZZ^d,
\end{equation*}
and the inverse transform yields the representation
\end{definition}
\begin{equation*}
u(x) = \sum_{k \in \ZZ^d}\,c_k\,\ee^{\ii\,\pi\,(k \d x) / a}.
\end{equation*}
Parseval's identity implies an isometric correspondence
\begin{equation}
\label{eq:parseval}
{\| u \|}_{L^2} =
{\Big( {\big( 2\,a \big)}^d \sum_{k \in \ZZ^d}\,|c_k|^2 \Big)}^{\frac{1}{2}}.
\end{equation}
\begin{remark}
Since the torus has finite measure, we have $ L^q \subseteq L^p $ for $ 1 \leq p \leq q \leq \infty $.
\end{remark}
We introduce the following notations:
$ H^s = H^s(Q) $,
$ \alpha = (\alpha_1,\ldots,\alpha_d) \in \NN_0^d $,\, $ |\alpha| = \alpha_1 + \cdots + \alpha_d $,\,
$ \alpha! = \alpha_1!\,\cdots\,\alpha_d! $. Weak derivatives are denoted by $ D^\alpha u $.
The norm on $ H^s $ is
\begin{subequations}
\begin{equation*}
{\| u \|}_{H^s} = {\Big( \sum_{|\alpha| \leq s} {|D^\alpha u|}^2 \Big)}^{\frac{1}{2}}.
\end{equation*}
$ H^s $ is a Hilbert space with inner product
\begin{equation*}
{\il u,v \ir}_{H^s} = \sum_{|\alpha| \leq s} \il D^\alpha u,D^\alpha v \ir_{L^2}.
\end{equation*}
\end{subequations}

\paragraph{Fourier representation of $ D^\alpha u $.}
The weak derivative has the Fourier representation
\begin{subequations}
\begin{align*}
D^\alpha u(x)
&= {\Big( \frac{\ii\,\pi}{a} \Big)}^{|\alpha|}
   \sum_{k \in \ZZ^d} k^\alpha\,c_k\,\ee^{\ii\,\pi\,(k \d x) / a}, \nl
\end{align*}
and thus
\begin{equation*}
{|D^\alpha u(x)|}^2
 = {\big( 2\,a \big)}^d\,{\Big( \frac{\pi}{a} \Big)}^{2|\alpha|}
                          \sum_{k \in \ZZ^d} k^{2\alpha}\,{|c_k|}^2
\end{equation*}
\end{subequations}
as a consequence of Parseval's identity~\eqref{eq:parseval}.
Here, $ k^\alpha = k_1^{\alpha_1} \cdots k_d^{\alpha_d} $.

In the following, we will need to resort to the fact that the norms on the Sobolev space
$H^s$ can equivalently be stated in terms of the Fourier coefficients. The proof
of the following lemma is given in~\cite{robinson01}.

\begin{lemma} \label{lemma:HsC-fourier}
With computable constants $ \underline{C},\,\overline{C} $ depending on $ d $ and $ s $ we have
\begin{equation*}
\underline{C}\,{\| u \|}_{H^s}
\leq {\Big( {\big( 2\,a \big)}^d \sum_{k \in \ZZ^d} \big( 1 + {|k|}^{2s} \big)\,|c_k|^2 \Big)}^{\frac{1}{2}} \leq
\overline{C}\,{\| u \|}_{H^s}.
\end{equation*}
\end{lemma}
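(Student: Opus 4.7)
The plan is to reduce both sides to weighted $\ell^2$ sums over $\ZZ^d$ via Parseval, then establish pointwise equivalence of the two weights. Starting from the identity $\norm{L^2}{D^\alpha u}^2 = (2a)^d (\pi/a)^{2|\alpha|}\sum_{k\in\ZZ^d} k^{2\alpha}|c_k|^2$ already derived in the appendix, I would sum over $|\alpha|\leq s$ and interchange the (finite) sum in $\alpha$ with the nonnegative sum in $k$ to obtain
$$\norm{H^s}{u}^2 = (2a)^d \sum_{k\in\ZZ^d} \sigma(k)\,|c_k|^2, \qquad \sigma(k) := \sum_{|\alpha|\leq s}(\pi/a)^{2|\alpha|}\,k^{2\alpha}.$$
The middle expression in the lemma is a sum of the same shape but with weight $1+|k|^{2s}$. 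Thus the claim is equivalent to the uniform pointwise comparison of the two symbols: find constants $\underline{c},\overline{c}>0$, depending only on $d$, $s$, $a$, such that $\underline{c}\,(1+|k|^{2s}) \leq \sigma(k) \leq \overline{c}\,(1+|k|^{2s})$ for every $k\in\ZZ^d$.

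For the upper bound I would use $|k_i|\leq |k|$, which gives $|k^\alpha| \leq |k|^{|\alpha|}$ and hence $k^{2\alpha} \leq |k|^{2|\alpha|} \leq 1+|k|^{2s}$ for every multi-index with $|\alpha|\leq s$. Since only finitely many such $\alpha$ exist, the constant $\overline{c}$ may be taken as their number times $\max_{|\alpha|\leq s}(\pi/a)^{2|\alpha|}$.

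The lower bound is the step that needs a little care: one cannot extract the $|k|^{2s}$ behaviour by simultaneously controlling all terms of $\sigma(k)$, but a \emph{single} well-chosen term suffices. I would handle $k=0$ separately (there $\sigma(0)=1 = 1+|0|^{2s}$). For $|k|\geq 1$, choose $j^{\ast}$ with $|k_{j^\ast}|=\max_i|k_i|$, so that $|k|\leq d\,|k_{j^\ast}|$; retaining only the contribution of the axis-aligned multi-index $\alpha = s\,e_{j^\ast}$ in $\sigma(k)$ yields $\sigma(k)\geq (\pi/a)^{2s}k_{j^\ast}^{2s} \geq (\pi/(a\,d))^{2s}|k|^{2s}$. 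Combining this with the trivial bound $\sigma(k)\geq 1$ coming from $\alpha=0$ produces the required $\underline{c}$.

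I do not expect any subtle difficulty: the one place that requires thought is recognising that the lower bound must be extracted from a single axis-aligned multi-index rather than via a uniform estimate of the entire symbol. Taking square roots in the resulting chain of inequalities then delivers the stated form with $\underline{C}=\underline{c}^{\,1/2}$ and $\overline{C}=\overline{c}^{\,1/2}$.
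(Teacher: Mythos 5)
Your argument is correct. Note that the paper itself does not prove this lemma at all --- it simply refers the reader to Robinson's book \cite{robinson01} --- so your proposal supplies a self-contained proof rather than mirroring an in-text one. The route you take is the standard one and it works: Parseval reduces both quantities to weighted $\ell^2$ sums, and the issue becomes the pointwise equivalence of the symbol $\sigma(k)=\sum_{|\alpha|\leq s}(\pi/a)^{2|\alpha|}k^{2\alpha}$ with $1+|k|^{2s}$; your upper bound via $|k_i|\leq |k|$ and your lower bound via the single axis-aligned multi-index $\alpha=s\,e_{j^\ast}$ (using $|k|\leq d\,|k_{j^\ast}|$, and treating $k=0$ separately) are exactly the right steps, and they implicitly use that $s$ is an integer here, which is consistent with the paper since the Fourier expression is only later used to \emph{define} $H^s$ for non-integer $s$. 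Two cosmetic remarks: the constants inevitably depend on $a$ as well as on $d,s$ (the lemma's wording suppresses this, presumably with $a$ fixed or normalized to $\pi$), and in your final sentence the constants come out as reciprocals with the roles swapped, i.e.\ $\underline{C}=\overline{c}^{-1/2}$ and $\overline{C}=\underline{c}^{-1/2}$, since the pointwise bounds on $\sigma$ control $\|u\|_{H^s}$ by the Fourier expression and vice versa; this is bookkeeping only and does not affect the validity of the proof.
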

Lemma~\ref{lemma:HsC-fourier} shows that $ H^s $ is identical to the space
\begin{subequations}
\label{eq:Hsnorm-fourier}
\begin{equation}
\label{eq:Hsnorm-fourier-1}
\Big\{ u(x) = \sum_{k \in \ZZ^d}\,c_k\,\ee^{\ii\,\pi\,(k \d x) / a} \in L^2,
      ~~\sum_{k \in \ZZ^d}\,\big( 1+{|k|}^{2s} \big)\,{|c_k|}^2 < \infty \Big\},
\end{equation}
and the norm $ {\| u \|}_{H^s} $ is equivalent to the norm
\begin{equation}
\label{eq:Hsnorm-fourier-2}
{\| u \|}_{H_\ast^s} =
{\Big( {\big( 2\,a \big)}^d \sum_{k \in \ZZ^d} \big( 1 + {|k|}^{2s} \big)\,|c_k|^2 \Big)}^{\frac{1}{2}}.
\end{equation}
\end{subequations}
Moreover, \eqref{eq:Hsnorm-fourier} serves as the definition of the spaces $ H^s $ for
non-integer $ s $.
%
\section{Sobolev embeddings} \label{sec:sobemb}
\subsection{Continuity} 
\begin{theorem}
\label{theorem:HsC}
For $ s > d/2 $ we have $ H^s \subseteq C^0 $, and the embedding
$ H^s \hookrightarrow C^0 $ is continuous, i.e.,
\begin{equation}
\label{eq:HsC}
{\| u \|}_{\infty} \leq \pazocal{C}_s\,{\| u \|}_{H^s} \quad \text{for all}~~ u \in H^s.
\end{equation}
\end{theorem}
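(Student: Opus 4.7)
The plan is to work directly with the Fourier representation of $u \in H^s$ and exploit the equivalence of norms established in Lemma~\ref{lemma:HsC-fourier}. Write $u(x) = \sum_{k \in \ZZ^d} c_k\,\ee^{\ii\,\pi\,(k \d x)/a}$ and estimate the pointwise value by the triangle inequality, $|u(x)| \leq \sum_{k \in \ZZ^d} |c_k|$. The core idea is to factor out the decay that comes from the Sobolev regularity via the Cauchy--Schwarz inequality:
\begin{equation*}
\sum_{k \in \ZZ^d} |c_k|
= \sum_{k \in \ZZ^d} \big( 1 + |k|^{2s} \big)^{-1/2} \cdot \big( 1 + |k|^{2s} \big)^{1/2}\,|c_k|
\leq \Big( \sum_{k \in \ZZ^d} \big( 1 + |k|^{2s} \big)^{-1} \Big)^{1/2} \Big( \sum_{k \in \ZZ^d} \big( 1 + |k|^{2s} \big)\,|c_k|^2 \Big)^{1/2}.
\end{equation*}

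The second factor is, up to the constant $(2a)^{d/2}$, exactly the norm ${\| u \|}_{H_\ast^s}$ introduced in \eqref{eq:Hsnorm-fourier-2}, which by Lemma~\ref{lemma:HsC-fourier} is equivalent to ${\| u \|}_{H^s}$. Hence, once the first factor is shown to be finite, taking the supremum over $x$ yields \eqref{eq:HsC} with an explicit constant $\pazocal{C}_s$ that depends only on $d$, $s$, and $a$.

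The key technical step, which I regard as the main obstacle, is verifying convergence of the series $\sum_{k \in \ZZ^d} (1 + |k|^{2s})^{-1}$ precisely under the assumption $s > d/2$. I would handle this by grouping lattice points according to $|k|$: the number of $k \in \ZZ^d$ with $|k| \sim R$ grows like $R^{d-1}$, so the series is comparable to $\int_1^\infty R^{d-1}(1+R^{2s})^{-1}\,\dd R$, which converges exactly when $2s - (d-1) > 1$, i.e., $s > d/2$. This is the only place where the hypothesis enters quantitatively.

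Finally, to conclude $u \in C^0$ (not merely $u \in L^\infty$), I note that the same Cauchy--Schwarz estimate shows the Fourier series converges absolutely, hence uniformly. The partial sums are trigonometric polynomials, which are continuous and periodic, and a uniform limit of continuous periodic functions is again continuous and periodic. Therefore $u$ has a continuous representative, and \eqref{eq:HsC} gives the continuity of the embedding $H^s \hookrightarrow C^0$.
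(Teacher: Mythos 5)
Your proposal is correct and follows essentially the same route as the paper: pointwise bound by $\sum_k |c_k|$, Cauchy--Schwarz with the weights $(1+|k|^{2s})^{\pm 1/2}$, the norm equivalence of Lemma~\ref{lemma:HsC-fourier}, convergence of $\sum_k (1+|k|^{2s})^{-1}$ by counting lattice points in shells $|k|=m$, and absolute hence uniform convergence of the Fourier series to get continuity. Your shell count $\sim R^{d-1}$ is in fact the sharp one (the paper's displayed bound $\binom{m+d-1}{d-1}\leq C\,m^{d}$ is coarser than the exact growth $m^{d-1}$), so your version pins down the threshold $s>d/2$ cleanly.
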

\begin{proof}
The proof is indicated in~\cite{robinson01}. In the following we work out the
argument in detail.
Consider an arbitrary $ u \in H^s $. With
\begin{align*}
|u(x)| &= \Big| \sum_{k \in \ZZ^d}\,c_k\,\ee^{\ii\,\pi\,(k \d x) / a}\,\Big|
       \leq \sum_{k \in \ZZ^d}\,|c_k|, \\
{\| u \|}_\infty &\leq \sum_{k \in \ZZ^d}\,|c_k|,
\end{align*}
the Cauchy-Schwarz inequality in $ \ell^2 = \ell_d^2 $ yields
\begin{align*}
{\| u \|}_\infty
&\leq \sum_{k \in \ZZ^d}\,\frac{1}{{\big( 1 + {|k|}^{2s} \big)}^{\frac{1}{2}}}\,
                          {\big( 1 + {|k|}^{2s} \big)}^{\frac{1}{2}}\,|c_k| \\
&\leq \bigg( \sum_{k \in \ZZ^d}\,\frac{1}{1 + {|k|}^{2s}} \bigg)^{\frac{1}{2}} \cdot
      \underbrace{\bigg( \sum_{k \in \ZZ^d}\,\big( 1 + {|k|}^{2s} \big)\,{|c_k|}^2 \bigg)^{\frac{1}{2}}}_{=\,C\,{\| u \|}_{H_\ast^s}}
\end{align*}
(with $ C = {\big( 2\,a \big)}^{-\frac{d}{2}} $), provided that the series
\begin{equation}
\label{eq:k2s-series}
\sum_{k \in \ZZ^d}\,\frac{1}{1 + {|k|}^{2s}}
\end{equation}
is convergent.
\begin{itemize}
\item For $ d=1 $,
\begin{equation*}
\sum_{k_1=-\infty}^{\infty}\,\frac{1}{1 + {|k_1|}^{2s}}
= 1 + 2 \sum_{k_1=1}^{\infty}\,\frac{1}{1 + {|k_1|}^{2s}}\,,
\end{equation*}
where
\begin{equation*}
\sum_{k_1=1}^{\infty}\,\frac{1}{1 + {|k_1|}^{2s}} \leq
\sum_{k_1=1}^{\infty}\,\frac{1}{{|k_1|}^{2s}}
\end{equation*}
is convergent for $ 2s>1 $, i.e., $ s > 1/2 = d/2 $.
\item For general $ d $ we consider
\begin{align*}
\sum_{k \in \ZZ^d}\,\frac{1}{1 + {|k|}^{2s}}
&\leq C
\sum_{k \in \NN_0^d}\,\frac{1}{1 + {|k|}^{2s}}
= \sum_{m=0}^{\infty}\,\sum_{\stackrel{|k|=m}{k \in \NN_0^d}}\,\frac{1}{1 + m^{2s}} \\
&= \sum_{m=0}^{\infty}\,\binom{m+d-1}{d-1}\,\frac{1}{1 + m^{2s}}
   \leq C \sum_{m=0}^{\infty}\,\frac{m^d}{1 + m^{2s}} < \infty
\end{align*}
for $ s>d/2 $.
\end{itemize}
This shows that, for $ s > d/2 $, the series~\eqref{eq:k2s-series} is convergent and that
$ u \in H^s $ satisfies~\eqref{eq:HsC}. Furthermore, the absolute summability of the
Fourier coefficients $ c_k $ implies that the Fourier series for $ u $ is uniformly
convergent, which in turn implies the continuity of $ u $.
\qed
\end{proof}

\begin{corollary}
For $ s > d/2 + n $ we have $ H^s \subseteq C^n $, and the embedding
$ H^s \hookrightarrow C^n $ is continuous, i.e.,
\begin{equation*}
{\| u \|}_{C^n} \leq \pazocal{C}_{s,n}\,{\| u \|}_{H^s} \quad \text{for all}~~ u \in H^s.
\end{equation*}
\end{corollary}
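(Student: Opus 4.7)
The plan is to reduce the corollary to Theorem~\ref{theorem:HsC} by applying the $ H^s \hookrightarrow C^0 $ embedding to each weak derivative $ D^\alpha u $ with $ |\alpha| \leq n $. First we recall from the Fourier representation in Appendix~\ref{sec:period} that differentiation of order $ |\alpha| $ amounts to multiplication of the Fourier coefficients $ c_k $ of $ u $ by $ (\ii\,\pi/a)^{|\alpha|}\,k^\alpha $. Hence, if $ u \in H^s $, then $ D^\alpha u $ has Fourier coefficients $ \tilde c_k $ with $ |\tilde c_k| \leq C\,|k|^{|\alpha|}\,|c_k| $, and consequently
\begin{equation*}
\sum_{k \in \ZZ^d} \big( 1 + |k|^{2(s-|\alpha|)} \big)\,|\tilde c_k|^2 \leq C\,\sum_{k \in \ZZ^d} \big( 1 + |k|^{2s} \big)\,|c_k|^2 < \infty,
\end{equation*}
so that $ D^\alpha u \in H^{s-|\alpha|} $ with $ \|D^\alpha u\|_{H^{s-|\alpha|}} \leq C\,\|u\|_{H^s} $ by the norm equivalence of Lemma~\ref{lemma:HsC-fourier}.

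Next, since $ s > d/2 + n $ and $ |\alpha| \leq n $, we have $ s - |\alpha| > d/2 $, so Theorem~\ref{theorem:HsC} applies to $ D^\alpha u $ and yields
\begin{equation*}
\|D^\alpha u\|_\infty \leq \pazocal{C}_{s-|\alpha|}\,\|D^\alpha u\|_{H^{s-|\alpha|}} \leq \pazocal{C}_{s,n}\,\|u\|_{H^s}.
\end{equation*}
The continuity of each $ D^\alpha u $ follows exactly as in Theorem~\ref{theorem:HsC} from the uniform convergence of the associated Fourier series. Summing over the finitely many multi-indices $ \alpha $ with $ |\alpha| \leq n $ then yields the asserted $ C^n $ bound, with constant $ \pazocal{C}_{s,n} $ absorbing the number of such multi-indices as well as the constants from Lemma~\ref{lemma:HsC-fourier} and Theorem~\ref{theorem:HsC}.

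The only subtle point is to ensure that the weak derivatives $ D^\alpha u $ coincide with classical derivatives, so that the resulting function is genuinely of class $ C^n $. This, however, is a direct byproduct of the uniform convergence argument already established in the proof of Theorem~\ref{theorem:HsC}: once $ D^\alpha u $ is represented by an absolutely and uniformly convergent Fourier series for every $ |\alpha| \leq n $, the Fourier series for $ u $ can be differentiated termwise up to order $ n $, and the resulting classical partial derivatives must agree with the weak derivatives. Beyond this identification, no significant obstacle is anticipated; the remainder of the argument is routine bookkeeping of constants.
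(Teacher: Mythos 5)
Your argument is correct and is precisely the intended one: the paper states the corollary without proof as an immediate consequence of Theorem~\ref{theorem:HsC}, namely applying the $C^0$ embedding to each weak derivative $D^\alpha u \in H^{s-|\alpha|}$ with $s-|\alpha| > d/2$ and using the Fourier characterization of Lemma~\ref{lemma:HsC-fourier}. Your additional care in identifying weak with classical derivatives via uniform convergence of the termwise-differentiated Fourier series is exactly the right (and only) subtle point, so nothing is missing.
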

\subsection{Integrability} 
In order to study integrability properties of functions $ u \in H^s $ we need to interrelate
them to summability properties of its Fourier transform in $ \ell^q $ spaces, with
\begin{equation*}
{\| {\hat u} \|}_{\ell^q} = {\Big( \sum_{k \in \ZZ^d} {|c_k|}^q \Big)}^{\frac{1}{q}}
\end{equation*}
($ {\hat u} = ( u_k )_{k \in \ZZ^d} $).
For the proof of the following result see~\cite[Theorem 2.1 \& 2.2]{katz68} and also
\cite{rudin87}.

\begin{lemma}[Hausdorff-Young]
Let $ 1 \leq p \leq 2 $ and $ \frac{1}{p} + \frac{1}{q} = 1 $. Then
\begin{subequations}
\label{wq:HY}
\begin{equation}
\label{eq:HY1}
{\| {\hat u} \|}_{\ell^p} \leq C\,{\| u \|}_{L^q},
\end{equation}
and
\begin{equation}
\label{eq:HY2}
{\| u \|}_{L^q} \leq C\,{\| {\hat u} \|}_{\ell^p}.
\end{equation}
\end{subequations}
\end{lemma}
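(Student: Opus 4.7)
The plan is to deduce the Hausdorff--Young inequalities from Riesz--Thorin interpolation, using as endpoint estimates Parseval's identity and the elementary $ L^1 \to \ell^\infty $ bound for the Fourier coefficients.

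First I would view the Fourier transform as a linear operator
$ \pazocal{F}\!:\,u \mapsto (c_k)_{k \in \ZZ^d} $
and establish the two endpoint estimates. At $ q=\infty,\,p=1 $, the definition of $ c_k $ immediately gives
\begin{equation*}
|c_k| \leq \tfrac{1}{{(2a)}^d}\,\int_Q |u(x)|\,\dd x,
\end{equation*}
so $ {\| \pazocal{F} u \|}_{\ell^\infty} \leq C\,{\| u \|}_{L^1} $. At $ q=2,\,p=2 $, Parseval's identity~\eqref{eq:parseval} supplies the bound
$ {\| \pazocal{F} u \|}_{\ell^2} \leq C\,{\| u \|}_{L^2} $ (in fact with equality up to the volume factor).

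Next I would invoke the Riesz--Thorin interpolation theorem on the strip $ 0 \leq \theta \leq 1 $ with the pair $ (L^1,L^2) \to (\ell^\infty,\ell^2) $. Setting $ \theta $ so that
$ \tfrac{1}{q} = \tfrac{1-\theta}{1} + \tfrac{\theta}{2} $ and
$ \tfrac{1}{p} = \tfrac{1-\theta}{\infty} + \tfrac{\theta}{2} $,
one reads off $ \tfrac{1}{p}+\tfrac{1}{q} = 1 $ with $ 1 \leq p \leq 2 $, and the interpolated bound is precisely~\eqref{eq:HY1}. For the converse direction~\eqref{eq:HY2} I would apply the same scheme to the synthesis operator $ (c_k) \mapsto \sum_k c_k\,\ee^{\ii\,\pi\,(k \d x)/a} $; its endpoint bounds are the trivial estimate
$ {\| u \|}_{L^\infty} \leq \sum_k |c_k| = {\| \hat u \|}_{\ell^1} $
(already exploited in the proof of Theorem~\ref{theorem:HsC}) together with Parseval's identity again, and Riesz--Thorin with the pair $ (\ell^1,\ell^2) \to (L^\infty,L^2) $ interpolates these to the desired $ \ell^p \to L^q $ bound.

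The main obstacle is not a technical calculation but the careful verification of the interpolation hypotheses on the torus, in particular that both endpoint operators are defined on a common dense subspace (for example trigonometric polynomials) and that the induced operator-norm bounds are uniform in the finite-dimensional projections, so that a limiting argument extends the interpolated estimate to all $ u \in L^q $, respectively all $ \hat u \in \ell^p $. Since both endpoint inequalities are elementary on this common dense set and periodicity poses no additional difficulty, this is essentially bookkeeping; the heart of the statement is provided by Riesz--Thorin, which is why the authors are content to cite~\cite{katz68,rudin87} rather than reproduce the argument.
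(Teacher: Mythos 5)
Your argument is correct in substance, and it is essentially the proof the paper implicitly relies on: the paper offers no proof of this lemma at all, deferring to \cite{katz68} and \cite{rudin87}, and those references establish Hausdorff--Young exactly as you do, by Riesz--Thorin interpolation between the Parseval endpoint \eqref{eq:parseval} and the trivial endpoint ($L^1\to\ell^\infty$ for the analysis operator, $\ell^1\to L^\infty$ for the synthesis operator). The density issues you flag really are only bookkeeping: on the torus $L^q\subseteq L^1$ and $\ell^p\subseteq\ell^2$ for the exponents in question, so both operators are defined on the full spaces, and trigonometric polynomials, respectively finitely supported coefficient sequences, form a common dense subspace.

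One labeling point deserves care. Your interpolation conditions $\tfrac1q=\tfrac{1-\theta}{1}+\tfrac{\theta}{2}$ and $\tfrac1p=\tfrac{\theta}{2}$ force $1\le q\le 2$ and $p\ge 2$, i.e.\ they yield $\|\hat u\|_{\ell^p}\le C\,\|u\|_{L^q}$ with the \emph{function-side} exponent in $[1,2]$, which is the standard form of the analysis inequality --- not ``$1\le p\le 2$'' as you state and as \eqref{eq:HY1} is literally written. Read literally with $p=1$, $q=\infty$, \eqref{eq:HY1} would assert absolute summability of the Fourier coefficients of every bounded (even continuous) periodic function, which is false; the intended statement interchanges the roles of $p$ and $q$, as the footnote in the proof of Theorem~\ref{theorem:HsL} indicates when the lemma is applied. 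Your second interpolation, for the synthesis operator with endpoints $\ell^1\to L^\infty$ and $\ell^2\to L^2$, gives \eqref{eq:HY2} for $1\le p\le 2$ exactly as stated, and that is the only direction the paper actually uses.
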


\begin{theorem}
\label{theorem:HsL}
For $ s < d/2 $ and
\begin{subequations}
\begin{equation*}
2 \leq p < \frac{d}{\frac{d}{2}-s}
\end{equation*}
we have $ H^s \subseteq L^p $, and the embedding
$ H^s \hookrightarrow L^p $ is continuous, i.e.,
\begin{equation*}
{\| u \|}_{L^p} \leq C\,{\| u \|}_{H^s} \quad \text{for all}~~ u \in H^s.
\end{equation*}
\end{subequations}
\end{theorem}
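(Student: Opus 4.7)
The plan is to reduce the $L^p$ estimate to an $\ell^q$ estimate on the Fourier side via the Hausdorff--Young inequality and then exploit Lemma~\ref{lemma:HsC-fourier} to convert the Fourier-weighted $\ell^2$ information contained in $\|u\|_{H^s}$ into the desired $\ell^q$ control. The case $p=2$ is immediate since $H^s \hookrightarrow L^2$ holds trivially for $s \ge 0$ (by Parseval and $1 \le 1+|k|^{2s}$), so the interesting range is $2 < p < d/(d/2-s)$. For such $p$, set $q = p/(p-1) \in (1,2)$, so that \eqref{eq:HY2} gives
\begin{equation*}
{\| u \|}_{L^p} \le C\,{\| \hat u \|}_{\ell^q}
= C\,\Big( \sum_{k \in \ZZ^d} |c_k|^q \Big)^{1/q}.
\end{equation*}

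Next I would introduce the weight $(1+|k|^{2s})^{q/2}$ artificially and apply H\"older's inequality in the form
\begin{equation*}
\sum_{k \in \ZZ^d} |c_k|^q
= \sum_{k \in \ZZ^d} \Big[ |c_k|^q (1+|k|^{2s})^{q/2} \Big] \cdot (1+|k|^{2s})^{-q/2},
\end{equation*}
with conjugate exponents $2/q$ and $2/(2-q)$, which are well defined because $q \in (1,2)$. The first factor then becomes $\big( \sum_k (1+|k|^{2s})\,|c_k|^2 \big)^{q/2}$, which by Lemma~\ref{lemma:HsC-fourier} is equivalent to a constant times $\|u\|_{H^s}^q$. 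The second factor is $\big( \sum_{k} (1+|k|^{2s})^{-q/(2-q)} \big)^{(2-q)/2}$, and the whole argument hinges on showing that this series converges.

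The key step (and main obstacle) is therefore the convergence of
\begin{equation*}
\sum_{k \in \ZZ^d}\,\frac{1}{(1+{|k|}^{2s})^{q/(2-q)}},
\end{equation*}
which, by the same grouping of lattice points $\{k \in \NN_0^d : |k| = m\}$ used in the proof of Theorem~\ref{theorem:HsC}, reduces to the convergence of $\sum_{m\ge 1} m^{d-1}\,m^{-2sq/(2-q)}$. This requires $2sq/(2-q) > d$, i.e.\ $q(2s+d) > 2d$. Translating via $q = p/(p-1)$, an elementary manipulation yields $p(d-2s) < 2d$, i.e.\ $p < d/(d/2-s)$, which is exactly the hypothesis of the theorem (and shows that the upper bound on $p$ is sharp in this argument). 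Combining the three estimates yields
\begin{equation*}
{\| u \|}_{L^p} \le C\,{\| u \|}_{H^s},
\end{equation*}
with a constant depending on $d,s,p$, which completes the proof. The main subtleties are tracking the conjugate-exponent bookkeeping and excluding the endpoint $p = d/(d/2-s)$ where the weight series diverges; the case $p=2$ must be split off to avoid the degeneracy $2-q=0$.
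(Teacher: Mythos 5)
Your proposal is correct and follows essentially the same route as the paper's proof: splitting off $p=2$ as trivial, applying the Hausdorff--Young inequality~\eqref{eq:HY2} with conjugate exponent $q$, using H\"older with exponents $\tfrac{2}{q},\tfrac{2}{2-q}$ together with Lemma~\ref{lemma:HsC-fourier}, and verifying convergence of the weight series $\sum_k (1+|k|^{2s})^{-q/(2-q)}$ by grouping lattice points, which yields exactly the condition $p < d/(\tfrac{d}{2}-s)$. No gaps; the conjugate-exponent bookkeeping matches the paper's argument.
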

\begin{remark}
It can be shown that the assertion of Theorem~\ref{theorem:HsL}
is also valid for the endpoint case $ s = d/2 $ and $ p<\infty $; see~\cite{robinson01}.
\end{remark}
\begin{proof}
The proof is indicated in~\cite{robinson01}.
In the following we work out the argument in detail.

For $ p=2 $ the assertion is trivial. For $ 2 < p < \infty $ and $ \frac{1}{p} + \frac{1}{q} = 1 $,
inequality~\eqref{eq:HY2}
implies\footnote{Here, $ p $ plays the role of $ q $ in~\eqref{eq:HY2}
                 and vice versa. We have $ 1 \leq q \leq 2 $.}
\begin{align*}
{\| u \|}_{L^p}
&\leq C\,{\| {\hat u} \|}_{\ell^q}
= C\, {\Big( \sum_{k \in \ZZ^d} {|c_k|}^q \Big)}^{\frac{1}{q}}  \\
&= C\, {\Bigg( \sum_{k \in \ZZ^d}
         \bigg({\big( 1+{|k|}^{2s} \big) {|c_k|}^2 \bigg)}^{\frac{q}{2}}\,
         {\bigg( 1+{|k|}^{2s} \bigg)}^{-\frac{q}{2}} \Bigg)}^{\frac{1}{q}} \\
&\leq C\,\bigg[\,{\Big( \sum_{k \in \ZZ^d} {\big( 1+{|k|}^{2s} \big)} {|c_k|}^2 \Big)}^{\frac{q}{2}}\,
                 {\Big( \sum_{k \in \ZZ^d} {\big( 1+{|k|}^{2s} \big)}^{-\frac{q}{2-q}} \Big)}^{\frac{2-q}{2}}\,
           \bigg]^{\frac{1}{q}} \\
&= C\,{\Big( \sum_{k \in \ZZ^d} {\big( 1+{|k|}^{2s} \big)} {|c_k|}^2 \Big)}^{\frac{1}{2}}\,
      {\Big( \sum_{k \in \ZZ^d} {\big( 1+{|k|}^{2s} \big)}^{-\frac{q}{2-q}} \Big)}^{\frac{2-q}{2q}} \\
&\leq C\,{\| u \|}_{H^s}\,
     {\Big( \sum_{k \in \ZZ^d} {\big( 1+{|k|}^{2s} \big)}^{-\frac{q}{2-q}} \Big)}^{\frac{2-q}{2q}}.
\end{align*}
Here we have used H{\"o}lder's inequality
with conjugate exponents $ \frac{2}{q},\frac{2}{2-q} $,
and Lemma~\ref{lemma:HsC-fourier}. This estimate makes sense provided the sum in the
latter expression is finite, i.e., if
\begin{equation*}
\sum_{k \in \ZZ^d}\,{\bigg( \frac{1}{1 + {|k|}^{2s}} \bigg)}^{\frac{q}{2-q}}
{=} C
\sum_{k \in \NN_0^d}^{\infty}\,{\bigg( \frac{1}{1 + {|k|}^{2s}} \bigg)}^{\frac{q}{2-q}} < \infty\,.
\end{equation*}
We reason as in the proof of Theorem~\ref{theorem:HsC}: We have
\begin{align*}
&\sum_{k \in \NN_0^d}\,{\bigg( \frac{1}{1 + {|k|}^{2s}} \bigg)}^{\frac{q}{2-q}}
= \sum_{m=0}^{\infty}\,\sum_{\stackrel{|k|=m}{k \in \NN_0^d}}
   {\bigg( \frac{1}{1 + m^{2s}} \bigg)}^{\frac{q}{2-q}} \\
&~= \sum_{m=0}^{\infty}\,\binom{m+d-1}{d-1}
   {\bigg( \frac{1}{1 + m^{2s}} \bigg)}^{\frac{q}{2-q}}
   \leq C \sum_{m=0}^{\infty}\,
   \frac{m^d}{{\big( 1 + m^{2s} \big)}^{\frac{2sq}{2-q}}} < \infty
\end{align*}
for $ (2sq)/(2-q) > d $, i.e., $ q > 2d/(2s+d) $. With $ 1/p + 1/q = 1 $ this is
equivalent to $ p < d/(\frac{d}{2}-s) $, as asserted.
\qed
\end{proof}

In the special cases $ d=1,2,3 $, which are relevant to our analysis, this means:
\begin{subequations}
\begin{itemize}
\item $ d=1 $: For $ s < 1/2  $ and
\begin{equation*}
2 \leq p < \frac{1}{\frac{1}{2}-s}
\end{equation*}
we have $ H^s \subseteq L^p $.
\item $ d=2 $: For $ s < 1  $ and
\begin{equation*}
2 \leq p < \frac{2}{1-s}
\end{equation*}
we have $ H^s \subseteq L^p $. In particular,
$ H^1 \subseteq L^p $ for all $ 1 \leq p < \infty $.
\item $ d=3 $: For $ s < 3/2  $ and
\begin{equation*}
2 \leq p < \frac{3}{\frac{3}{2}-s}
\end{equation*}
we have $ H^s \subseteq L^p $. In particular, $ H^1 \subseteq L^6 $.
\end{itemize}
\end{subequations}


\begin{thebibliography}{10}
\expandafter\ifx\csname url\endcsname\relax
  \def\url#1{\texttt{#1}}\fi
\expandafter\ifx\csname urlprefix\endcsname\relax\def\urlprefix{URL }\fi
\expandafter\ifx\csname href\endcsname\relax
  \def\href#1#2{#2} \def\path#1{#1}\fi

\bibitem{soederlind06b}
G.~S\"{o}derlind, L.~Wang, Adaptive time-stepping and computational stability,
  J. Comput. Appl. Math. 185 (2006) 225--243.

\bibitem{haireretal02}
E.~Hairer, C.~Lubich, G.~Wanner, Geometric Numerical Integration,
  Springer-Verlag, Berlin--Heidelberg--New York, 2002.

\bibitem{hocost10}
M.~Hochbruck, A.~Ostermann, Exponential integrators, Acta Numer. 19 (2010)
  209--286.

\bibitem{descombesetal11}
S.~Descombes, M.~Duarte, T.~Dumont, V.~Louvet, M.~Massot, Adaptive time
  splitting method for multi-scale evolutionary partial differential equations,
  Confluentes Math. 03 (2011) 413--443.

\bibitem{jahlub00}
T.~Jahnke, C.~Lubich, Error bounds for exponential operator splittings, BIT 40
  (2000) 735--744.

\bibitem{thalhammer08}
M.~Thalhammer, High-order exponential operator splitting methods for
  time-dependent {S}chr\"odinger equations, SIAM J. Numer. Anal. 46~(4) (2008)
  2022--2038.

\bibitem{lubich07}
C.~Lubich, On splitting methods for {S}chr\"odinger--{P}oisson and cubic
  nonlinear {S}chr\"odinger equations, Math. Comp. 77 (2008) 2141--2153.

\bibitem{knth10a}
O.~Koch, C.~Neuhauser, M.~Thalhammer, Error analysis of high-order splitting
  methods for nonlinear evolutionary {S}chr\"{o}dinger equations and
  application to the {MCTDHF} equations in electron dynamics, M2AN
  Math.~Model.~Numer.~Anal. 47 (2013) 1265--1284.

\bibitem{blanesetal13}
S.~Blanes, F.~Casas, P.~Chartier, A.~Murua, Optimized high-order splitting
  methods for some classes of parabolic equations, Math. Comp. 82 (2013)
  1559--1576.

\bibitem{hansenetal09a}
E.~Hansen, A.~Ostermann, Exponential splitting for unbounded operators, Math.
  Comp. 78 (2009) 1485--1496.

\bibitem{auzingeretal13a}
W.~Auzinger, O.~Koch, M.~Thalhammer, Defect-based local error estimators for
  splitting methods, with application to {S}chr\"{o}dinger equations, {P}art
  {II}: Higher-order methods for linear problems, J. Comput. Appl. Math. 255
  (2013) 384--403.

\bibitem{auzingeretal13b}
W.~Auzinger, H.~Hofst{\"a}tter, O.~Koch, M.~Thalhammer, Defect-based local
  error estimators for splitting methods, with application to {S}chr\"{o}dinger
  equations, {P}art {III}: The nonlinear case, J. Comput. Appl. Math. 273
  (2014) 182--204.

\bibitem{auzingeretal12a}
W.~Auzinger, O.~Koch, M.~Thalhammer, Defect-based local error estimators for
  splitting methods, with application to {S}chr\"{o}dinger equations, {P}art
  {I}: {T}he linear case, J. Comput. Appl. Math. 236 (2012) 2643--2659.

\bibitem{knth10b}
O.~Koch, C.~Neuhauser, M.~Thalhammer, Embedded split-step formulae for the time
  integration of nonlinear evolution equations, Appl. Numer. Math. 63 (2013)
  14--24.

\bibitem{auzingeretal14a}
W.~Auzinger, O.~Koch, M.~Thalhammer, Defect-based local error estimators for
  high-order splitting methods involving three linear operators., Numer.
  Algorithms 70 (2015) 61--91.

\bibitem{part1}
W.~Auzinger, H.~Hofst{\"a}tter, D.~Ketcheson, O.~Koch, Practical splitting
  methods for the adaptive integration of nonlinear evolution equations.
  {Part~I}: Construction of optimized schemes and pairs of schemes, to appear
  in BIT.

\bibitem{splithp}
W.~Auzinger, O.~Koch, Coefficients of various splitting methods,
  \texttt{http://www.asc.tuwien.ac.at/\~{}winfried/splitting/}.

\bibitem{haireretal87}
E.~Hairer, S.~N{\o}rsett, G.~Wanner, Solving Ordinary Differential Equations
  {I}, Springer-Verlag, Berlin--Heidelberg--New York, 1987.

\bibitem{recipes88}
W.~Press, B.~Flannery, S.~Teukolsky, W.~Vetterling, Numerical Recipes in {C}
  --- The Art of Scientific Computing, Cambridge University Press, Cambridge,
  U.K., 1988.

\bibitem{grayscott90}
P.~Gray, S.~Scott, Chemical Waves and Instabilities, Clarendon, Oxford, 1990.

\bibitem{robinson01}
J.~Robinson, Infinite-Dimensional Dynamical Systems, {C}ambridge University
  Press, Cambridge, U.K., 2001.

\bibitem{katz68}
Y.~Katznelson, An Introduction to Harmonic Analysis, Dover Publications, Inc.,
  New York, 1968.

\bibitem{rudin87}
W.~Rudin, Real and Complex Analysis, 3rd Edition, {M}c{G}raw-Hill, 1987.

\end{thebibliography}
\end{document}